\newtheorem{theorem}{Theorem}
\newtheorem{lemma}{Lemma}
\newtheorem{corollary}{Corollary}
\newcommand{\var}{\textrm{Var}}
\newcommand{\Z}{\mathbb{Z}}
\newcommand{\R}{\mathbb{R}}
\newcommand{\ep}{\epsilon}
\newcommand{\sgn}{\mathrm{sgn}}
\renewcommand{\P}{\mathbb{P}}
\newcommand{\E}{\mathbb{E}}
\newcommand{\X}{\mathcal{X}}
\newcommand{\0}{\mathbf{0}}
\newcommand{\poi}{\mathrm{Poisson}}
\newcommand{\unif}{\mathrm{Uniform}}
\newcommand{\F}{\mathcal{F}}
\newcommand{\A}{\mathcal{A}}
\newcommand{\1}{\mathbf{1}}
\author{Eric Foxall}
\title{Stochastic calculus and sample \\path estimation for jump processes}
\begin{document}
\maketitle

\begin{abstract}
We describe stochastic calculus in the context of processes that are driven by an adapted point process of locally finite intensity and are differentiable between jumps. This includes Markov chains as well as non-Markov processes. By analogy with It{\^o} processes we define the drift and diffusivity, which we then use to describe a general sample path estimate. We then give several examples, including ODE approximation, processes with linear drift, first passage times, and an application to the stochastic logistic model.
\end{abstract}

\section{Introduction}\label{sec:intro}
In this article we obtain a general theory of stochastic calculus for processes whose randomness is driven by a compound point process of random and locally finite intensity. Naturally, this includes, but is not limited to, Markov chains, nor does the Markov property does not need to be assumed. The main goal is to demonstrate the simplicity and flexibility of the theory in the context of sample path estimation, and to give a common framework for a growing number of examples in the literature.\\

Existing research on sample path estimation includes the early work of Kurtz (\cite{kurtzDE}, see also \cite{kurtzDE2} and references within) in the context of approximating Markov chains by solutions to ODEs. Later work of Darling and Norris \cite{darlingnorris} contains similar estimates as well as several examples, and a different method of proof. An extension to certain Markov chains on state spaces with countably many coordinates is given in \cite{barzak2}. In each case, sample path estimates are obtained by working with either quadratic or exponential martingales.\\

In the spirit of It{\^o} calculus, we first define drift and diffusivity processes, and obtain rules of differentiation for the drift. Using these rules we derive an exponential local martingale that we use to give a useful sample path estimate in terms of the compensator (indefinite integral of the drift) and predictable quadratic variation (indefinite integral of the diffusivity). We then discuss several ways in which this estimate can be used to control sample paths.\\

The paper is organized as follows. In Section \ref{sec:main} we give the main results concerning existence of local martingales, processes falling within this class, stochastic calculus and sample path estimation. In Section \ref{sec:app} we demonstrate several ways in which the sample path estimate can be used in practice, including ODE approximation, tail estimates for processes with linear drift, first passage time bounds, and an application to the stochastic logistic model. Section \ref{sec:proof} contains proofs of the main results.\\

In order to maintain a fairly lightweight theory, we've chosen to focus on processes with bounded jump size and locally finite jump intensity, and without any continuous martingale (i.e., Brownian) terms.
 In addition, we take a constructive approach, which is again lighter on the theory, and better suited to applications and specific examples.
 However, we expect that with appropriate assumptions, some of our results, such as the general sample path estimate, apply to a larger class of processes.\\

\section{Definition and Main Results}\label{sec:main}
In as general a form as possible, we consider a stochastic process $X = (X_t)_{t \geq 0}$ in continuous time that jumps in response to an underlying point process with finite intensity, and is differentiable between jumps. The goal is to obtain a class of processes that
\begin{enumerate}[noitemsep]
\item is closed under the usual operations on functions such as pointwise\\addition, scaling, multiplication, composition, and integration,
\item has a well-defined notion of drift and diffusivity, and
\item is such that zero drift processes are local martingales.
\end{enumerate}

We record some definitions and notation. We denote the state space $\X$, which we assume is a normed space. For a function $f$ let
$f(a^-) = \lim_{x\to a^-}f(x) \ \hbox{and} \ f(a^+) = \lim_{x \to a^+} f(x).$
Recall that a function $f$ from $\R_+$ into a metric space is \emph{right-continuous with left limits} (rcll) if
$$f(a^-) \ \hbox{exists and} \ f(a^+) =f(a) \ \hbox{for each} \ a.$$
Left-continuous with right limits (lcrl) is defined similarly. For an rcll $f$ define the \emph{jump part} $\Delta f$ by $\Delta f(x) = f(x)-f(x^-)$. Note that $f - \Delta f$ is continuous.
 Let $\zeta$ be a stopping time, and say that a property \emph{holds locally} on $[0,\zeta)$ if there is a \emph{localizing sequence}, that is, an increasing sequence $(\tau_n)$ of stopping times with $\lim_{n\to\infty} \tau_n = \zeta$, such that the property holds on $[0,\tau_n)$ for each $n$.
 Let $(\Omega,\A,\P)$ be a probability space and $\F = (\F_t)$ a filtration. A process $X:\R_+\times\Omega \to \X$ is \emph{progressively measurable} if $(t,\omega)\mapsto X_t(\omega)$ is measurable and $X$ is adapted to $\F$. The same definition applies if $X$ is defined only on $\{(t,\omega):t < \zeta(\omega)\}$.\\

We now describe the probability space. Let $\{(w_i,u_i):i=1,2,\dots\}$ be an i.i.d. family of random variables, each $w_i$ exponentially distributed with mean $1$, and each $u_i$ uniformly distributed on $[0,1]$ and independent of $w_i$. For $i\ge 1$ let $t_i = \sum_{j=1}^i w_i$, so that $(t_i)_{i\ge 1}$ is the ordered set of points in a Poisson point process with intensity 1. Let $(\Omega,\A,\P)$ denote the corresponding probability space.\\

Next we describe the transition rate. Let $q$ be a stochastic process on $(\Omega,\A,\P)$ with values in $\R_+$, such that $(t,\omega) \mapsto q_t(\omega)$ is measurable. Define $r$ by $r_t = \sup\{r:\int_0^rq_sds < t\}$ for $t\ge 0$, and for $i\ge 1$ let $r_i = r_{t_i}$, so that
$$J = \{r_i:i\ge 1\}\quad\hbox{and}\quad K = \{(r_i,u_i):i\ge 1\}$$
are respectively the set of jump times, and the jump times with sample values at each jump, of a Poisson point process with time-dependent intensity $q_t$. For a Borel set $B \subset \R_+$ let $JB = J\cap B$ and $KB = \{(s,u) \in K:s\in B\}$, and let $J_t = J [0,t]$ and $K_t = K[0,t]$. Let $\F = (\F_t)$ denote the natural filtration for $(K_t)$.\\

To account for explosion, we restrict to the time interval $[0,\zeta)$ where $\zeta=\lim_{n\to\infty}\inf\{t:q_t \ge n\}$, which ensures the jump rate is locally finite. This is a natural assumption, for example, for Markov chains, since in that case $\zeta$, when finite, corresponds to the explosion time. For later use let $[0,\zeta) \times \Omega$ denote the set $\{(t,\omega) \in \R_+ \times \Omega: t< \zeta(\omega)\}$. We now describe the process, taking a constructive approach. Our generic process $X$ takes values in a normed space $(\X,|\cdot|)$, and given $X_0 \in \X$, is defined for $t \in [0,\zeta)$ by
\begin{equation}\label{eq:constr-proc}
X_t = X_0 + \int_0^t D_sds + \sum_{(s,u) \in K_t}\Delta_s(u),
\end{equation}
using the data
$$\begin{array}{rcll}
q &:& [0,\zeta)\times\Omega\to\R_+ & \hbox{the transition rate,}\\
D &:& [0,\zeta)\times\Omega\to \X & \hbox{the derivative, and}\\
\Delta &:& [0,\zeta)\times\Omega \to L^{\infty}([0,1],\X) & \hbox{the jump function}.
\end{array}$$
We make the following assumptions on $(q,D,\Delta)$.
\begin{enumerate}[noitemsep]
\item $(q,D,\Delta)$ is progressively measurable with respect to $\F$.
\item $t\mapsto \Delta_t$ is left-continuous.
\item $\int_0^t|D_s|ds$ is locally finite, that is, $(\zeta\wedge\inf\{t:\int_0^t |D_s|ds = n\})_{n\ge 0}$
is a localizing sequence for $\zeta$.
\item Bounded jumps: for some $c_\Delta<\infty$ and a.e. $\{(t,u,\omega): t < \zeta(\omega)\}$, $|\Delta_t(u,\omega)| \le c_\Delta$.
\end{enumerate}
For lack of a better term, we refer to such a process $X$ as a \emph{hybrid jump process} or hjp for short, since it has in general both a jump component and can vary between jumps. Based on our assumptions, $X$ is progressively measurable with respect to $\F$, is rcll, and is absolutely continuous at all points $t \notin J$. It has the decomposition
$$X_t - \Delta X_t = X_0 + \int_0^t D_sds \quad \hbox{and} \quad \Delta X_t = \Delta_t(u)\1((t,u) \in K),$$
and a.s., $|\Delta X_t| \le c_\Delta$ for all $t\ge 0$. As a shorthand, given an hjp $X$ we use $q(X),D(X),\Delta(X)$ to denote the data, with the subscript like $q_t(X)$. Note the difference between $\Delta X_t$, the jump in $X$ at time $t$, and $\Delta_t(X)$, the jump function of $X$ at time $t$.\\

We first characterize the set of hjp that are local martingales, and identify the analogue of Dynkin's martingale. Note this does not follow from the corresponding result for Feller processes, as we have not assumed even that $X$ is Markov.

\begin{theorem}[Compensation]\label{thm:compensate}
Let $X$ be a hybrid jump process with data $q,D,\Delta$, and define the \emph{drift} $\mu(X)$ by
$$\mu_t(X) = D_t + q_t \int_0^1 \Delta_t(u)du.$$
Then $X$ is a local martingale (submartingale) if $\mu_t(X) = 0$ ($\mu_t(X) \ge 0$) for a.e. $(t,\omega) \in [0,\zeta)\times \Omega$. In particular, if we define the \emph{compensator} $\bar{X}$ by
$$\bar{X}_t = X_0 + \int_0^t \mu_s(X)ds$$
then for any hjp $X$ the \emph{compensated process} $M(X)$ defined by
$$M_t(X) = X_t - \bar{X}_t$$
is a local martingale.
\end{theorem}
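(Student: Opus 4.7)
The first observation is that $M_t(X) = J_t - C_t$ where $J_t := \sum_{(s,u) \in K_t} \Delta_s(u)$ is the pure jump part and $C_t := \int_0^t q_s \int_0^1 \Delta_s(u)\,du\,ds$ is the candidate compensator; the continuous term $\int_0^t D_s\,ds$ cancels between $X$ and $\bar X$. So the content of the theorem reduces to showing that $N := J - C$ is a local martingale, from which the submartingale claim when $\mu(X) \ge 0$ follows immediately since $X = N + \bar X$ and $\bar X$ is then nondecreasing. To localize I would set
\[
\tau_n = \zeta \wedge \inf\{t : \textstyle\int_0^t q_s\,ds \ge n\} \wedge \inf\{t : \int_0^t |D_s|\,ds \ge n\},
\]
which by the hjp assumptions is a localizing sequence for $\zeta$; on $[0,\tau_n]$ the expected number of jumps is at most $n$, each of norm at most $c_\Delta$, and $|C_{t\wedge\tau_n}| \le n\,c_\Delta$, so $N_{t \wedge \tau_n}$ is integrable.

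The heart of the proof is the following compensation formula: for any bounded, left-continuous, $\F$-adapted scalar process $f$ and $r \le t$,
\[
\E\Bigl[\sum_{r_i \in J_{t\wedge\tau_n}} f_{r_i} - \int_0^{t\wedge\tau_n} q_s f_s\,ds \,\Bigm|\, \F_r\Bigr] = 0.
\]
I would establish this by a time change to operational time. With $A_t := \int_0^t q_s\,ds$, the jump times satisfy $A_{r_i} = t_i$, so $\{t_i\}$ is a unit-rate Poisson process in operational time, and $\int_0^{t\wedge\tau_n} q_s f_s\,ds = \int_0^{A(t\wedge\tau_n)} f_{A^{-1}(\theta)}\,d\theta$. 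The left-continuous $\F$-adapted integrand becomes predictable in the time-changed filtration, reducing the claim to the standard compensation identity for the unit Poisson process, which can be verified by a direct memoryless-property calculation between successive $t_i$.

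To conclude, I would apply the formula with $f_s := \int_0^1 \Delta_s(u)\,du$, which inherits left-continuity and progressive measurability from $\Delta$. For each jump time $r_i$, left-continuity and progressive measurability of $\Delta$ make $\Delta_{r_i}(\cdot)$ an $\F_{r_i^-}$-measurable function of $u \in [0,1]$, while by construction $u_i$ is uniform on $[0,1]$ and independent of $\F_{r_i^-}$; hence
\[
\E[\Delta_{r_i}(u_i) \mid \F_{r_i^-}] = \int_0^1 \Delta_{r_i}(u)\,du = f_{r_i}.
\]
Summing via the tower property gives $\E[J_{t\wedge\tau_n} \mid \F_r] = \E\bigl[\sum_{r_i \in J_{t\wedge\tau_n}} f_{r_i} \mid \F_r\bigr]$, and combined with the compensation formula this yields $\E[N_{t\wedge\tau_n} \mid \F_r] = N_{r \wedge \tau_n}$, so $N$ is a local martingale. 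The main obstacle is justifying the operational-time change: because $q$ may depend on past jumps in a non-Markov way, the time change is itself random, and one must carefully track the interplay between $\F$ and the time-changed filtration. The cleanest workaround is probably to reason directly by conditioning on $\F_{r_i}$ and exploiting the memorylessness of the exponential waiting time $w_{i+1}$, which yields the conditional density $q_{r_{i+1}} \exp\bigl(-\int_{r_i}^{r_{i+1}} q_s\,ds\bigr)$ for the next jump time and reduces the whole argument to an inductive computation over successive jumps.
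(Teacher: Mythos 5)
Your proposal is correct in outline and rests on the same structural facts as the paper's proof: the reduction to compensating the pure jump part (the $\int_0^t D_s\,ds$ term cancels between $X$ and $\bar X$), integrability via Poisson domination of the jump count after localization, independence of the uniform marks $u_i$ from $\F_{r_i^-}$ together with left-continuity of $\Delta$, and, crucially, the fact that between jumps the data $(q,D,\Delta)$ are determined by the filtration at the last jump, so that the next jump time has conditional density $q_s e^{-\int_r^s q_v\,dv}$. Where you differ is in how the central identity (your compensation formula, the paper's $\Delta(s,t)=\bar\Delta(s,t)$) is executed: the paper fixes a deterministic mesh $h=(t-s)/N$, shows the increment of the jump sum and of its candidate compensator agree up to a uniform $O(h^2)$ (one-jump-dominates) error, and telescopes as $N\to\infty$; you propose either an operational-time change or a jump-by-jump induction using the explicit inter-jump conditional density. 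The time-change route, as you yourself note, has a real gap (predictability of the integrand and the meaning of $\F$ under a random time change), so the viable version of your argument is the direct-conditioning induction, which amounts to verifying on each inter-jump interval that $\E[f_{r_{i+1}}\1(r_{i+1}\le T)\mid\F_{r_i}] = \E[\int_{r_i}^{r_{i+1}\wedge T}q_sf_s\,ds\mid\F_{r_i}]$ and summing with dominated convergence. That organization buys a slightly more general statement (a compensation formula for arbitrary bounded left-continuous adapted integrands) and avoids needing $q$ bounded on the localization intervals, which is why your choice $\int_0^t q_s\,ds\ge n$ suffices where the paper uses $q_t\ge n$; the paper's discretization, by contrast, avoids any induction over a random number of jumps and keeps the error analysis entirely elementary. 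Either route is sound; just be aware that the summation over jumps requires the tower property at the random times $r_i$ and a dominated-convergence step, which deserve explicit justification in a full write-up.
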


Next we show that hybrid jump processes includes not only Markov chains but also mixtures of Markov chain and solutions to ordinary differential equations. We state the more general result first, then show how it includes Markov chains.

\begin{theorem}[Hybrid markov process]\label{thm:hmp}
Let $\X$ be a normed space and let
$$q:\X\to\R_+, \quad
D:\X\to\X, \quad \hbox{and} \quad
\Delta:\X \to L^{\infty}([0,1],\X)$$
be such that
\begin{enumerate}[noitemsep]
\item $x \mapsto D(x)$ is locally Lipschitz, and
\item for some $c_\Delta$, $\sup_{x \in \X}\|\Delta(x)\|_{L^{\infty}} \le c_\Delta$.
\end{enumerate}
Then given $X_0$, there is a unique hjp $X$ on time interval $[0,\zeta)$ where
$$\zeta = \lim_{n\to\infty}\inf\{t:q(X_t) \ge n \ \hbox{or} \ \int_0^t |D(X_s)|ds \ge n \},$$
satisfying \eqref{eq:constr-proc} with $q_t(X) = q(X_t), \ D_t(X) = D(X_t)$ and $\Delta_t(X) = \Delta(X_{t^-})$.
 If, in addition, $q$ is a locally bounded function on $\X$, that is,
 $\sup_{x \in \X:|x| \le r} q(x)<\infty$ for each $r >0$, then we can take for $\zeta$ the first escape time of $X_t$, that is,
$$\lim_{n\to\infty}\inf\{t:|X_t|\ge n\}.$$
\end{theorem}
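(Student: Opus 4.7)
My plan is to construct $X$ by induction on jump times, solving the deterministic ODE $\dot\varphi = D(\varphi)$ between jumps and appending the appropriate jump at each Poisson epoch, then to identify the resulting process with \eqref{eq:constr-proc} and read off the escape-time statement.

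\textbf{Construction.} Set $r_0 = 0$. Given $r_{i-1}$ and $X_{r_{i-1}}$, use local Lipschitzness of $D$ and Picard--Lindel\"of to obtain a unique maximal solution $\varphi$ on $[r_{i-1}, e_i)$ of $\dot\varphi(t) = D(\varphi(t))$ with $\varphi(r_{i-1}) = X_{r_{i-1}}$, and set $X_t = \varphi(t)$ there. Define the accumulated rate
$$A(t) = \sum_{j=1}^{i-1} w_j + \int_{r_{i-1}}^t q(X_s)\,ds, \quad t \in [r_{i-1}, e_i),$$
put $r_i = \inf\{t \in [r_{i-1}, e_i) : A(t) \ge t_i\}$ (with $r_i = e_i$ if the set is empty), and, when $r_i < e_i$, set $X_{r_i} = X_{r_i^-} + \Delta(X_{r_i^-})(u_i)$, which is well-defined and bounded by $c_\Delta$. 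The horizon is $\zeta := \lim_i r_i$.

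\textbf{Identification and uniqueness.} The choice of $A$ arranges that $r_i$ coincides with the time $r_{t_i}$ produced by the probabilistic setup when we feed in $q_t = q(X_t)$; thus $X$ is an hjp with data $q_t(X) = q(X_t)$, $D_t(X) = D(X_t)$ and $\Delta_t(X) = \Delta(X_{t^-})$, the last being left-continuous by the rcll property of $X$. Progressive measurability follows from the inductive construction from measurable data. Assumption 3 on $\int_0^t |D(X_s)|\,ds$ follows because this integral is included as a localizing condition in the stated $\zeta$ (equivalently, in each interval $[r_{i-1}, e_i)$ the ODE exits every compact set at $e_i$). Uniqueness is immediate: any hjp with the prescribed data must satisfy the same ODE on each interjump interval and have its jump times determined by the same $(w_i, u_i)$ via $A$.

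\textbf{Escape time.} When $q$ is locally bounded, let $\tau_n = \inf\{t : |X_t| \ge n\}$. If $|X_t| \le R$ on $[0,T]$, then $q$ locally bounded and $D$ locally Lipschitz (hence locally bounded) give $q(X_t), |D(X_t)| \le C_R$, whence $\int_0^T |D(X_s)|\,ds \le C_R T$ and $\sup_{t \le T} q(X_t) \le C_R$, so $T < \zeta$. Conversely, if $T < \zeta$, then $q(X_t)$ is bounded and $\int_0^T |D(X_s)|\,ds < \infty$ on $[0,T]$; the number of jumps there is a.s.\ finite and each is bounded by $c_\Delta$, so $\sup_{t \le T} |X_t| < \infty$. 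Hence $\lim_n \tau_n = \zeta$.

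\textbf{Main obstacle.} The delicate point is reconciling the two potential failure modes of the induction, namely ODE blowup (captured by $\int_0^t |D(X_s)|\,ds \to \infty$) and accumulation of the jump times $r_i$ (captured by $q(X_t) \to \infty$), and showing that the natural horizon $\lim_i r_i$ of the inductive construction coincides exactly with the $\zeta$ written in the theorem statement. The local Lipschitz and local boundedness assumptions are precisely what is needed to prevent a third, hidden failure mode.
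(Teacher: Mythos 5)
Your proposal is correct and follows essentially the same route as the paper: induction over the jump times, with Picard--Lindel\"of giving a unique solution of $\dot\varphi = D(\varphi)$ on each interjump interval, the jump $\Delta(X_{r_i^-})(u_i)$ appended at each epoch, and the escape-time identification in the locally bounded case argued exactly as you do (boundedness of $X$ forces boundedness of $q(X)$ and $|D(X)|$; conversely finiteness of $\int|D(X)|$ plus a.s.\ finitely many bounded jumps keeps $X$ bounded). The ``obstacle'' you flag resolves the same way in both write-ups: accumulation of the $r_i$ before time $t$ forces $\int_0^t q(X_s)\,ds=\infty$ and hence $\sup_{s<t}q(X_s)=\infty$, while ODE blowup forces $\int_0^t|D(X_s)|\,ds=\infty$, so the inductive horizon coincides with the stated $\zeta$.
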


To write a Markov chain in this framework, proceed as follows. Suppose $S$ is a countable subset of a normed space $\X$ and rates are given by a transition rate matrix $\{q_{ij}:i,j \in S\}$. Define the collection of functions $\{\Delta_i,q_i:i\geq 1\}$ with $\Delta_i:S\to\X$ and $q_i:S\to\R_+$ by taking $q_j(i) = q_{ij}$ and $\Delta_j(i) = j-i$ for $i\ne j$, and $q_j(j)=\Delta_j(j)=0$. For $x\in \X$ let $q(x) = \sum_i q_i(x)$. Let $c_i(x) = q(x)^{-1}\sum_{j=1}^i q_j(x)$ for $i\geq 1$, then define $\Delta(X)$ by $\Delta_t(X)(u) = \Delta(X_t,u)$, where
$$\Delta(x,u) = \sum_i \Delta_i(x)\mathbf{1}(c_{i-1}(x)< u \leq c_i(x)).$$
Letting $q_t(X) = q(X_t), D(X)\equiv 0$ and $\Delta_t(X) = \Delta(X_t)$ defines the process. Then, the Poisson thinning property shows it has the correct transition rates.\\

Next we describe the closure properties of hjp, that is, in what sense the class of hjp is closed under the usual operations of addition, multiplication, indefinite integral etc., and describe how the drift behaves under these operations. The fact that even routine operations on Markov processes lead to non-Markov processes (assuming the state space is left unchanged) was an important motivator for the definition of hjp.\\

In what follows, note that $\smash{\E[ \Delta_t(X) \mid \F]=\int_0^1 \Delta_t(X)(u)du}$ -- the first expression is used when we think it conveys the meaning more clearly. In addition, given $X$ we define the left-continuous process $X^-$ by $X_t^- = X_{t^-}$ for $t\ge 0$.

\begin{theorem}[Stochastic calculus]\label{thm:stocalc}
Fix a transition rate $q$ and a normed algebra $(\X,|\cdot|)$, and let $\F$ denote the filtration induced by $q$. Let $X,Y$ be hybrid jump processes (hjp) with common transition rate $q$ and state space $\X$. Let $f:\X\to\X$ and $g:\R_+\to\X$ be absolutely continuous functions; note that $g$ can be viewed as an $\F$-adapted hybrid jump process with data $q,g',0$. Let $\tau$ be an $\F$-stopping time. Define the \emph{covariability} $\sigma(X,Y)$ and \emph{predictable covariation} $\langle X,Y \rangle$ by
$$\sigma_t(X,Y) = q_t \E[ \ \Delta_t(X)\Delta_t(Y) \mid \F \ ] \quad\hbox{and} \quad \langle X,Y \rangle_t = \int_0^t \sigma_s(X,Y)ds,$$
and define $\Delta(f(X))$ and $I(X,Y)$ by
$$\begin{array}{rcl}
\Delta_t(f(X)) &=& f(X_t + \Delta_t(X)) - f(X_t) \quad \hbox{and}\\
I_t(X,Y) &=& \int_0^t\mu_s(X)Y_sds.
\end{array}$$
Then, the following are hjp with transition rate $q$ and $D,\Delta$ as shown. Below, $a \in \R$ is a constant. Also, for $XY$ require that $Y$ is bounded, i.e., for some $C>0$, a.s. $\sup_t |Y_t| \le C$, and for $f(X)$ require that $X$ is bounded, or $f$ is Lipschitz.\\
\begin{center}
\begin{tabular}{l l l l}
process & derivative & jump function & drift \\
\hline \vspace*{-5pt} \\
$g$ & $g'$ & $0$ & $g'$\\
$X^{\tau}$ & $D(X)^{\tau}$ & $\Delta(X)^{\tau}$ & $\mu(X)^\tau$ \\
$X + aY$ & $D(X)+aD(Y)$ & $\Delta(X) + a\Delta(Y)$ & $\mu(X) + a\mu(Y)$\\
$XY$ & $D(X)Y + XD(Y)$ & $\Delta(X)Y^- + X^-\Delta(Y) + \Delta(X)\Delta(Y)$ & $\mu(X)Y + X\mu(Y) + \sigma(X,Y)$\\
$f(X)$ & $f'(X)D(X)$ & $\Delta(f(X))$ & $f'(X)D(X) + \Delta(f(X))$\\
$I(X,Y)$ & $\mu(X)Y$ & $0$ & $\mu(X)Y$ \\
$\langle X,Y \rangle$ & $\sigma(X,Y)$ & $0$ & $\sigma(X,Y)$ \\
\end{tabular}
\end{center}
In particular, we note the following rules.
\begin{enumerate}[noitemsep]
\item \textbf{Deterministic function.} $\mu(g)=g'$.
\item \textbf{Linearity.} $\mu(X+aY) = \mu(X)+a\mu(Y)$, for $a \in \R$. 
\item \textbf{Product rule.} $\mu(X Y) = \mu(X)Y + X\mu(Y) + \sigma(X,Y)$.
\item \textbf{Chain rule.} $\mu(f(X)) = f'(X)D(X) + q\E[\Delta(f(X)) \mid \F]$.
\item \textbf{Indefinite integral.} for $I(X)$ given by $I_t(X) = \int_0^tX_sds$, $\mu(I(X)) = X$.
\end{enumerate}
Also, we have \textbf{Taylor approximation.} Suppose $\X=\R$ and $f \in C^2(\R)$. Define the \emph{diffusivity}
$$\sigma_t^2(X) = \sigma_t(X,X) = q_t\E[ \ \Delta_t^2(X) \mid \F \ ].$$
Then for $t\ge 0$,
$$|\mu_t(f(X)) - f'(X_t)\mu_t(X)| \le \frac{1}{2}\sigma_t^2(X)\sup_{|x-X_t| \le c_\Delta}|f''(x)|.$$
\end{theorem}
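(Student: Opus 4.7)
The plan is to reduce the estimate to a pointwise Taylor bound, using the chain rule already established in the theorem as the main ingredient. I expect the hard part to be essentially bookkeeping: once the chain rule is in hand, the derivative terms cancel and what remains is exactly the second-order Taylor remainder averaged over the jump distribution.

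First I would apply the chain rule formula for $\mu(f(X))$, writing
\begin{equation*}
\mu_t(f(X)) = f'(X_t) D_t(X) + q_t \int_0^1 \bigl[ f(X_t + \Delta_t(X)(u)) - f(X_t) \bigr] \, du.
\end{equation*}
The definition $\mu_t(X) = D_t(X) + q_t \int_0^1 \Delta_t(X)(u)\, du$ gives
\begin{equation*}
f'(X_t) \mu_t(X) = f'(X_t) D_t(X) + f'(X_t)\, q_t \int_0^1 \Delta_t(X)(u) \, du.
\end{equation*}
Subtracting, the $f'(X_t) D_t(X)$ terms cancel, leaving
\begin{equation*}
\mu_t(f(X)) - f'(X_t)\mu_t(X) = q_t \int_0^1 \bigl[ f(X_t + \Delta_t(X)(u)) - f(X_t) - f'(X_t) \Delta_t(X)(u) \bigr] \, du.
\end{equation*}

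Next I would estimate the integrand pointwise by Taylor's theorem with Lagrange remainder. For each $u$ and $\omega$, setting $h = \Delta_t(X)(u)$, there is $\xi$ between $X_t$ and $X_t+h$ with $f(X_t+h) - f(X_t) - f'(X_t) h = \tfrac{1}{2} f''(\xi) h^2$. By the bounded jumps assumption $|h| \le c_\Delta$, so $|\xi - X_t| \le c_\Delta$, and hence the integrand is bounded in absolute value by $\tfrac{1}{2} \sup_{|x - X_t| \le c_\Delta} |f''(x)| \cdot \Delta_t(X)(u)^2$. Bringing this nonnegative bound through the integral and recognising $q_t \int_0^1 \Delta_t(X)(u)^2\, du = \sigma_t^2(X)$ produces the claimed inequality.

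The only mild technicality is measurability of the intermediate point $\xi$ in the Lagrange remainder, but since I am only seeking an upper bound (not a measurable selection of $\xi$), it suffices to apply Taylor's theorem pointwise and then integrate the resulting nonnegative measurable bound; no selection argument is required. Otherwise the proof is essentially a one-line consequence of the chain rule, so I would present it concisely rather than elaborating further.
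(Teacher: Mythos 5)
Your argument for the Taylor approximation is correct and is essentially the paper's own proof: apply the chain rule, cancel the $f'(X_t)D_t(X)$ terms, bound the integrand via Taylor's theorem with Lagrange remainder using $|\Delta_t(X)(u)|\le c_\Delta$, and integrate to recognise $\sigma_t^2(X)$. Like the paper, you leave the verification of the table entries (that each listed process is an hjp with the stated data) as a routine check, so the scope and method match.
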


Using our theory, given an hjp we obtain a family of exponential local martingales, to which an application of Doob's maximal inequality, combined with the Taylor approximation of Theorem \ref{thm:stocalc}, yields a two-parameter family of sample path estimates. To state it we first define the \emph{predictable quadratic variation} $\langle X\rangle$ of $X$ by
$$\langle X \rangle_t = \int_0^t \sigma^2_s(X)ds.$$

\begin{theorem}[General sample path estimate]\label{thm:sampath}
Suppose $X$ is an hjp with $\X=\R$ defined on time interval $[0,\zeta)$, with jump size bounded by $c_\Delta>0$. Then, for $\lambda,a>0$ and $\bullet \in \pm$,
\begin{equation}\label{eq:sampath}
\P( \bullet M_t(X) \ge a + \frac{\lambda}{2}e^{\lambda c_\Delta} \langle X \rangle_t \ \hbox{for some} \ t < \zeta ) \leq e^{-\lambda a}.
\end{equation}
\end{theorem}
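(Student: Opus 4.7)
The plan is to construct an exponential nonnegative supermartingale $Z$ whose exceedance of $e^{\lambda a}$ is exactly the event in \eqref{eq:sampath}, then apply Doob's maximal inequality. Localization is needed to handle both the explosion time $\zeta$ and the fact that the chain rule in Theorem \ref{thm:stocalc} requires a bounded argument when applied to $f = \exp$.

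Concretely, set
$$Z_t = \exp\bigl(\lambda M_t(X) - \tfrac{\lambda^2}{2}e^{\lambda c_\Delta}\langle X\rangle_t\bigr),$$
so that $Z_0 = 1$ and the event in \eqref{eq:sampath} with $\bullet = +$ is $\{\sup_{t<\zeta} Z_t \ge e^{\lambda a}\}$. Writing $W = \lambda M(X) - \tfrac{\lambda^2}{2}e^{\lambda c_\Delta}\langle X\rangle$, fix a localizing sequence $(\zeta_n)$ for $\zeta$ and set $\tau_n = \zeta_n \wedge \inf\{t:|M_t(X)| \ge n\}$, so $W^{\tau_n}$ is bounded. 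Then $Z^{\tau_n} = e^{W^{\tau_n}}$ is an hjp by the stopping, linearity, indefinite-integral, and chain rules of Theorem \ref{thm:stocalc}. Using that $\bar X$ is continuous (so $\Delta(M(X)) = \Delta(X)$) and that $\mu(M(X)) = 0$ gives $D(M(X)) = -q\E[\Delta(X)\mid\F]$, the chain rule produces, on $[0,\tau_n)$,
$$\mu_t(Z) = Z_t\Bigl(q_t\E[\,e^{\lambda\Delta_t(X)} - 1 - \lambda\Delta_t(X)\mid\F\,] - \tfrac{\lambda^2}{2}e^{\lambda c_\Delta}\sigma_t^2(X)\Bigr).$$
Taylor's theorem with remainder gives $e^y - 1 - y \le \tfrac{y^2}{2}e^{|y|}$ for all $y\in\R$, and since $|\lambda\Delta_t(X)|\le \lambda c_\Delta$, the conditional expectation is bounded by $\tfrac{\lambda^2}{2}e^{\lambda c_\Delta}\sigma_t^2(X)$. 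Hence $\mu_t(Z) \le 0$ on $[0,\tau_n)$.

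By Theorem \ref{thm:compensate}, $Z^{\tau_n} = 1 + \int_0^{\cdot}\mu_s(Z^{\tau_n})\,ds + M_\cdot(Z^{\tau_n})$ with the integral non-increasing, so $Z^{\tau_n}$ is a nonnegative local supermartingale (hence a genuine supermartingale by Fatou) with $Z_0 = 1$. Doob's maximal inequality then gives $\P(\sup_{t\ge 0} Z_t^{\tau_n} \ge e^{\lambda a})\le e^{-\lambda a}$; since any $t<\zeta$ with $Z_t \ge e^{\lambda a}$ satisfies $t < \tau_n$ for all large $n$, monotone convergence establishes \eqref{eq:sampath} for $\bullet = +$. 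The $\bullet = -$ case follows by applying the $+$ case to the hjp $-X$ (linearity with $a=-1$, $Y=X$), for which $M(-X) = -M(X)$ and, since $\Delta(-X) = -\Delta(X)$, $\sigma^2(-X) = \sigma^2(X)$ and $\langle -X\rangle = \langle X\rangle$, with the same jump bound $c_\Delta$. The main technical obstacle is the computation of $\mu(Z)$: identifying the precise cancellation between the Taylor remainder of the jump term and the $\langle X\rangle$-derived drift is the heart of the argument, while the double localization (for $\zeta$ and for the unboundedness of $\exp$) is routine but requires care.
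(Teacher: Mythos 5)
Your proof is correct and follows essentially the same route as the paper: an exponential process built from $\lambda M(X) - \tfrac{\lambda^2}{2}e^{\lambda c_\Delta}\langle X\rangle$, the Taylor bound $e^y-1-y\le \tfrac{y^2}{2}e^{|y|}$ applied to the jump term, Doob's maximal inequality after localization, and monotone convergence to pass to $t<\zeta$. The only (cosmetic) difference is that the paper works with the exact exponential local martingale $E(X,\lambda)$ of Lemma \ref{lem:mart} and bounds it below by your $Z$, whereas you show directly that $Z$ has nonpositive drift and is therefore a nonnegative supermartingale; both hinge on the same cancellation.
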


In particular, with probability at least $1-2e^{-\lambda a}$, $|M_t(X)|$ never exceeds $a + \frac{\lambda}{2}e^{\lambda c_\Delta}\langle X \rangle_t$. Recall that $M(X) = X-\bar X$ and let $W_t = a + \frac{\lambda}{2}e^{\lambda c_\Delta}\langle X \rangle_t$ denote the right-hand side in the above event. We can then interpret the result as follows. The compensator $\bar X$ gives us in some sense our best guess of $X$ by a left-continuous process. Then, the envelope $\bar X \pm W$ gives us a gauge of how far away to expect $X$ to be from $\bar X$. What Theorem \ref{thm:sampath} tells us that we have a good chance (at least $1-2e^{-\lambda a}$) of finding $X$ within the envelope $\bar X \pm W$ for all time.

\section{Applications}\label{sec:app}
In order to simplify certain calculations, we begin with a reformulation of Theorem \ref{thm:sampath}.
 For $c>0$ the function $\lambda \mapsto \gamma_c(\lambda) = \lambda e^{\lambda c}/2$ is increasing and tends to $\infty$, so has a functional inverse $\lambda_c(\gamma)$ which is defined for $\gamma \in (0,\infty)$ and is also increasing. Let $\Gamma$ denote the function $x \mapsto xe^x/2$ and $\Gamma^{-1}$ the inverse function, and define $\psi$ by $\psi(y) = \Gamma^{-1}(y)/y$. Notice that $\gamma_c(\lambda) = \Gamma(\lambda c)/c$, so inverting gives $\lambda_c(\gamma) = \Gamma^{-1}(c\gamma)/c = \psi(c\gamma)\gamma$. If we let
$$\kappa_c(\gamma,a) = e^{\lambda_c(\gamma)a} = e^{\Gamma^{-1}(c\gamma)a/c} = e^{\psi(c\gamma)\gamma a},$$
then we can restate \eqref{eq:sampath} by saying that for $\lambda,a>0$ and $\bullet \in\pm$,
\begin{equation}\label{eq:ld}
\P(\bullet M_t(X) \ge a + \gamma \langle X \rangle_t  \ \hbox{for some} \ t < \zeta) \leq 1/\kappa_{c_\Delta(X)}(\gamma,a).
\end{equation}
Notice that $\Gamma$ is convex with fixed points $0$ and $\log 2$, and $\Gamma'(0)=1/2$. So, $\Gamma^{-1}$ is concave with the same fixed points and $(\Gamma^{-1})'(0)=2$, which means that $\psi$ is decreasing, takes values in $(0,2)$, $\psi(y) \ge 1$ for $y \le \log 2$ and $\lim_{y\to 0^+}\psi(y)=2$. Using the last expression for $\kappa$, which is perhaps the most helpful, $\kappa_{c_\Delta}(\gamma,a) \ge e^{\gamma a}$ if $c_\Delta\gamma \le \log 2$, and $\kappa_{c_\Delta}(\gamma,a) \sim e^{2\gamma a}$ as $c_\Delta\gamma \to 0$. In particular, $\kappa_{c_\Delta}(\gamma,a) \to e^{2\gamma a}$ as $c_\Delta \to 0$, when $\gamma,a$ are kept fixed.\\

In practice, it is often enough to estimate $\sigma_t^2(X)$ by the transition rate and jump size as follows. Letting $c_{\Delta,t}(X) = \|\Delta_t(X)\|_{L^{\infty}}$, $\sigma^2_t(X) \le \rho_t(X) := q_t c_{\Delta,t}^2(X)$, which can be plugged into \eqref{eq:ld} to give
$$\P(\bullet M_t(X) \ge a + \gamma \int_0^t \rho_s(X)ds \ \hbox{for some} \ t < \zeta) \leq 1/\kappa_{c_\Delta(X)}(\gamma,a).$$
Then, if $c_\rho(X) = \sup_{t,\omega} \rho_t(X(\omega))$ is finite, the error term is at most $a + \gamma c_\rho(X)t$. If we want a bound on a fixed time horizon, we can then take $a + \gamma c_\rho(X)T$ as the error and optimizing $\gamma a$ subject to $a+\gamma c_\rho(X)T = \delta$, obtain $a=\delta/2$ and $\gamma = \delta/(2c_\rho(X)T)$, so $\gamma a = \delta^2/(4c_\rho(X)T)$ and the estimate
$$\P(\bullet M_t(X)\ge \delta \ \hbox{for some} \ t < T \wedge \zeta) \le \exp(-\psi(c_\Delta(X)\delta/(2c_\rho(X)T))\delta^2/(4c_\rho(X)T)).$$
\noindent\textbf{ODE approximation.} If $X$, in addition, has drift $\mu_t(X) = \mu(X_t)$ for some Lipschitz function $\mu:\X\to\X$, then using Gronwall's inequality as described in \cite{kurtzDE}, we obtain the estimate
\begin{equation}\label{eq:ODEest}
\P( \sup_{t \le T\wedge\zeta}|X_t - \phi_t(X_0)| \ge e^{LT}\delta) \le \exp(-\psi(c_\Delta(X)\delta/(2c_\rho(X)T))\delta^2/(4c_\rho(X)T)),
\end{equation}
where $(t,x) \mapsto \phi_t(x)$ is the flow corresponding to the ODE $y' = \mu(y)$. Immediately this gives a strong estimate for some sequences of processes. Recall from \cite{kurtzDE} the definition of a density dependent Markov chain $X$ with $\X=\Z^d$ and transitions $q_{k,k+\ell} = nq(n^{-1}X_t,\ell)$ for some function $q:\R^{2d}\to \R_+$ and parameter $n$. Here we assume also that $\sum_{\ell}q(k,\ell)\le c_q$ for each $k \in \Z^d$ and $q(k,\ell)=0$ if $|\ell-k| \ge r$, for some $c_q,r>0$. Then, for the rescaled process $x:=n^{-1}X$, $q_t(x) \le nc_q$ and $c_\Delta(x) = n^{-1}r$, so $\rho_t(x) \le n^{-1}c_qr$ which we can take to be $c_\rho$. The argument to $\psi$ in the right-hand side of \eqref{eq:ODEest} becomes $\delta/2c_qT$, and $\psi(\delta/2c_qT)\to 2$ if $\delta \to 0$ while $T$ is kept fixed. In fact, taking $\delta = f(n)n^{1/2}$ with $f(n)=o(n^{1/2})$, the right-hand side of \eqref{eq:ODEest} $\sim \exp(-2 f(n)^2/4c_qrT)$ as $n\to\infty$. A similar upper bound on the probability holds if $f(n)=O(n^{1/2})$, just with a smaller constant than 2. This type of result is not new, but is included to demonstrate the ease with which explicit probability estimates can be obtained.\\

\noindent\textbf{Linear drift.} The next result controls the growth of a non-decreasing hjp with bounded jumps and linear (or more generally, sublinear) drift. It shows that the largest value ever reached by the normalized process $Y$ has an exponential tail. To obtain this result we rely on the fact that $\langle Y \rangle_t \le y c_\Delta(X)$ so long as $\sup_{s \le t}Y<y$.

\begin{lemma}[Linear drift]\label{lem:lindrift}
Let $(X_t)_{t\ge 0}$ be a non-decreasing hjp on $\X=\R_+$ such that
\begin{equation}\label{eq:lindrift}
\mu_t(X) \le \ell(t)X_t
\end{equation}
for some locally integrable deterministic function $\ell(t)$. Let $m(t) = \exp(\int_0^t \ell(s)ds)$ and let $Y_t = X_t/(X_0 m(t))$ denote the rescaled process. Let $\zeta' = \zeta \wedge \inf\{t:m(t)=\infty\}$. Then, for $y\geq 2$,
$$\P(\sup_{t < \zeta'} Y_t \geq y) \leq \E[e^{-(y-2)X_0/4c_\Delta(X)}]$$
If the non-decreasing assumption is replaced with the assumption $\sigma^2(X) \leq C\mu(X)$ a.s. for some $C>0$, then the same estimate holds with $\max(C,c_\Delta(X))$ in place of $c_\Delta(X)$.
\end{lemma}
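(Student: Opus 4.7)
The plan is to apply the sample path estimate in the form \eqref{eq:ld} directly to the rescaled process $Y$, first conditioning on $\F_0$ so that $X_0$ may be treated as a constant; the expectation is taken at the end. Viewing $g(t) := 1/(X_0 m(t))$ as a deterministic function with $g' = -\ell g$, and hence as an hjp with data $(q, g', 0)$, Theorem \ref{thm:stocalc} gives $\sigma(X, g) \equiv 0$ (since $\Delta(g) \equiv 0$) and
\[
\mu(Y) = \mu(X) g + X g' = (\mu(X) - \ell(t) X_t)/(X_0 m(t)) \le 0
\]
by \eqref{eq:lindrift}. Hence the compensator satisfies $\bar Y_t \le Y_0 = 1$, and $M_t(Y) \ge Y_t - 1$ for all $t$. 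Letting $\tau_y := \inf\{t : Y_t \ge y\}$, on the event $\{\tau_y < \zeta'\}$ right-continuity yields $Y_{\tau_y} \ge y$ and therefore $M_{\tau_y}(Y) \ge y - 1$.

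Next I control $\langle Y\rangle$. Since $X$ is non-decreasing, $0 \le \Delta_t(X) \le c_\Delta(X)$, so $\Delta_t(X)^2 \le c_\Delta(X)\Delta_t(X)$, which combined with \eqref{eq:lindrift} gives $\sigma_t^2(X) \le c_\Delta(X)\mu_t(X) \le c_\Delta(X) \ell(t) X_t$. Using $\Delta_t(Y) = \Delta_t(X)/(X_0 m(t))$ this becomes $\sigma_t^2(Y) \le c_\Delta(X) m'(t) Y_t/(X_0 m(t)^2)$. Since $Y < y$ on $[0, \tau_y)$ and $\int_0^t m'/m^2 \, ds = 1 - 1/m(t)$, I obtain
\[
\langle Y\rangle_{t \wedge \tau_y} \le (c_\Delta(X) y/X_0)(1 - 1/m(t \wedge \tau_y)) \le c_\Delta(X) y/X_0.
\]

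Now I apply \eqref{eq:ld}. Write $c := c_\Delta(X)/X_0$, which upper-bounds $c_\Delta(Y)$ provided $m \ge 1$. Choosing $\gamma = 1/(2c)$ gives $c\gamma = 1/2 < \log 2$, so $\kappa_{c_\Delta(Y)}(\gamma, a) \ge e^{\gamma a}$. With $a = (y-2)/2$ (nonnegative since $y \ge 2$),
\[
a + \gamma \langle Y\rangle_{\tau_y} \le (y-2)/2 + y/2 = y - 1 \le M_{\tau_y}(Y),
\]
so $\{\tau_y < \zeta'\}$ is contained in the event in \eqref{eq:ld}, yielding $\P(\tau_y < \zeta' \mid \F_0) \le e^{-\gamma a} = e^{-(y-2)X_0/(4c_\Delta(X))}$. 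The stated bound then follows by taking expectations. For the extension, the hypothesis $\sigma^2(X) \le C\mu(X)$ substitutes for the non-decreasing argument, giving $\langle Y\rangle_{t\wedge\tau_y} \le Cy/X_0$; taking $\gamma = X_0/(2\max(C, c_\Delta(X)))$ simultaneously satisfies both the $\kappa$ condition and the bound on $a$, producing the same estimate with $\max(C, c_\Delta(X))$ in place of $c_\Delta(X)$.

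The main subtlety is the time-dependent jump bound $|\Delta_t Y| \le c_\Delta(X)/(X_0 m(t))$, which is uniformly bounded only when $m \ge 1$. Under the standing assumptions this is essentially automatic: $X$ non-decreasing together with \eqref{eq:lindrift} and $X \ge 0$ forces $\ell \ge 0$ wherever $\mu(X) > 0$, and intervals where $\mu(X) = 0$ contribute nothing to $Y - 1$ or to $\langle Y\rangle$. A clean writeup may require localizing to time sets on which $m \ge 1$, but no new idea is needed beyond what is sketched above.
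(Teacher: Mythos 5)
Your proof is correct and follows essentially the same route as the paper's: kill the drift of $Y$ via the product rule, use monotonicity to get $\sigma^2(X)\le c_\Delta(X)\mu(X)$, integrate $\ell/m$ to bound $\langle Y\rangle_{\tau_y}\le c_\Delta(X)y/X_0$, and apply \eqref{eq:ld} after conditioning on $X_0$. The only differences are cosmetic — you fix $\gamma=1/(2c)$, $a=(y-2)/2$ directly where the paper optimizes $\gamma=(y-1)/(2yc)$ and then weakens $(y-1)^2/4y$ to $(y-2)/4$, and you explicitly flag the $m\ge 1$ issue (which is in fact forced, since $0\le\mu_t(X)\le\ell(t)X_t$ with $X_t\ge X_0>0$ gives $\ell\ge 0$ a.e.) — so both arrive at the identical bound.
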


\begin{proof}
First we treat the case $X_0=1$, so that $Y_t = X_t/m(t)$. Let $c=c_\Delta(X)$ and $q_t=q_t(X)$. Given $y>0$ define $\tau(y) = \inf\{t:Y_t \geq y\}$.
 Since $1/m(t) = e^{-\int_0^t\ell(s)ds}$, $(1/m(t))' = -\ell(t)/m(t)$, so using the product rule on $Y_t = X_t/m(t)$,
$$\mu(Y_t) \le \ell(t)X_t/m(t) + X_t(-\ell(t)/m(t)) = 0 \quad\hbox{and}\quad \bar Y \le Y_0.$$
Clearly $\sigma^2_t(Y) = (1/m(t))^2\sigma^2_t(X)$. 
 Since $X$ is non-decreasing, $0 \le \Delta_t(X) \le c$ and
$$\sigma^2(X_t) = q_t \int_0^1\Delta_t^2(X)(u)du \leq q_tc\int_0^1\Delta_t(X)(u)du.$$
Since $X$ is non-decreasing, $X_t' \geq 0$, so
$$q_t\int_0^1 \Delta_t(X)(u) \leq X_t' + q_t\int_0^1 \Delta_t(X)(u)du \leq \mu_t(X).$$
Combining, $\sigma^2_t(X) \leq c\mu_t(X)$. Using $\mu_t(X)=\ell(t)X_t = \ell(t)m(t)Y_t$,
$$\sigma^2_t(Y) \leq (1/m(t))^2c\mu_t(X) = (c/m(t))\ell(t) Y_t.$$
Since $Y_t <y$ for $t<\tau(y)$, $\langle Y \rangle_{\tau(y)} \leq y c\int_0^{\tau(y)}\ell(s)/m(s)ds = yc \alpha(\tau(y))$. Taking the antiderivative,
$$\alpha(t) = \int_0^t e^{-\int_0^s \ell(r)dr}\ell(s)ds = 1-e^{-\int_0^t \ell(s)ds} = 1-1/m(t) \leq 1 \quad\hbox{for all} \quad t \geq 0.$$
Since $Y_0=1$ and $Y_{\tau(y)} \geq y$, it follows that
$$M_{\tau(y)}(Y)-\gamma \langle Y \rangle_{\tau(y)} \geq y - 1 - yc \gamma.$$
Using \eqref{eq:ld} with $a=y-1-yc\gamma$,
$$\P(\sup_{t<\zeta'} Y_t \geq y) \leq 1/\kappa_c(\gamma,a).$$
Optimizing $\gamma a$ gives $\gamma = (y-1)/2yc$. If $y \ge 1$ then $c\gamma \le 1/2 \le \log 2$ and $\psi(c\gamma) \ge 1$ and
$$\kappa_c(\gamma,a) \ge (y-1)^2/4yc \geq (y - 2)/4c.$$
If instead we assume $\sigma^2_t(X) \leq C\mu_t(X)$, the same reasoning gives again $M_{\tau(y)}(Y)-\langle Y\rangle_{\tau(y)} \ge y-1 - yC\gamma$. Taking $\gamma = (y-1)/2y\max(C,c)$ which is at most $1/2c$, this is at least $(y-1)/2$ and
$$\gamma(y-1-yc\gamma) \ge (y-1)^2/4y\max(C,c) \geq (y-2)/4\max(C,c).$$
To treat general $X_0$, first condition on $X_0$ and apply the above to $X_t/X_0$, which has jump size $c/X_0$. Then, integrate over $X_0$ to obtain the result.
\end{proof}

\noindent\textbf{First passage times.} Next we derive some general first passage estimates for hjp with $\X=\R$, as a function of the drift, diffusivity and jump size. When the transition rate is bounded, we obtain scaling limits as $c_\Delta(X)\to 0$. For the next three lemmas we let $T_x = \inf\{t:X_t \geq x\}$ for $x>0$. Note that in the examples so far we have treated $M(X)$ and $a + \gamma \langle X\rangle_t$ as being fairly separate. However, if we ``unwrap'' the inequality $\pm M_t(X) - \gamma \langle X \rangle_t \ge a$ and view it as
$$\pm(X_t - X_0) - \int_0^t ( \pm\mu_s(X)ds + \gamma\sigma_s^2(X))ds \ge a,$$
then we can obtain estimates of $X_t-X_0$, which is the approach we take below. 

\begin{lemma}[Drift barrier]\label{lem:driftbar}
 Let $X$ be an hjp with $\X=\R$ and suppose $x>c=c_\Delta(X)$. Suppose there are $\mu,C_\mu,\sigma^2>0$ so that
 $$0<X_t<x \quad \hbox{implies} \quad \mu_t(X) \leq -\mu,\quad |\mu_t(X)| \leq C_\mu \quad\hbox{and}\quad \sigma^2_t(X) \leq \sigma^2.$$
Let $t_0 = (x-c)/20C_\mu$, $\gamma = \mu/\sigma^2$ and $a = (x-c)/2$, and let $\kappa = \kappa_c(\gamma,a)$. Then, for integer $k\geq 1$,
 $$\P( \ \sup_{t < kt_0 \wedge \zeta}X_t \ge x \ \mid \ X_0 \leq x/2 \ ) \leq 3k/\kappa.$$
 In particular,
 \begin{equation}\label{eq:driftbar}
 \P( \ \sup_{t < \lfloor \kappa^{1/2} \rfloor t_0 \wedge \zeta}X_t \geq x \ \mid \ X_0 \leq x/2 \ ) \leq 3/\kappa^{1/2}.
 \end{equation}
\end{lemma}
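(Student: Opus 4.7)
The plan is to instantiate \eqref{eq:ld} with $a = (x-c)/2$ and $\gamma = \mu/\sigma^2$, so that $\kappa = \kappa_c(\gamma,a)$, and then to show that on any path with $X_0 \le x/2$ that reaches $\{X_t \ge x\}$ within a single window of length $t_0$, the martingale $M_t(X)$ essentially forces the event in \eqref{eq:ld}. The full bound with factor $k$ would follow by iterating over $k$ deterministic windows.

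The clean single-interval core goes as follows. Let $T_x = \inf\{t : X_t \ge x\}$ and consider the event $G = \{T_x < t_0 \wedge \zeta\} \cap \{X_s \in (0,x) \text{ for all } s < T_x\}$. On $G$, the hypotheses give $\int_0^{T_x}\mu_s(X)\,ds \le -\mu T_x$ and $\langle X\rangle_{T_x} \le \sigma^2 T_x$; combined with $X_{T_x} - X_0 \ge x/2$ this yields
\begin{equation*}
M_{T_x}(X) - \gamma\langle X\rangle_{T_x} \;\ge\; \tfrac{x}{2} + \mu T_x - \mu T_x \;=\; \tfrac{x}{2} \;\ge\; a,
\end{equation*}
so that \eqref{eq:ld} bounds $\P(G) \le 1/\kappa$. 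The hypothesis $|\mu_t(X)| \le C_\mu$ and the specific calibration $t_0 = (x-c)/(20C_\mu)$ enter only when bounding the ``bad'' paths that exit $(0,x)$ through $0$ before reaching $x$: any such path must climb a range exceeding $x$ within time $t_0$, the drift alone can account for at most $(x-c)/20$ of that, so the remaining martingale swing is large and can be handled by one or two further applications of \eqref{eq:ld}---either with the opposite sign $\bullet = -$ on the downward excursion, or with a restarted version on the return journey back up. Combining these scenarios by a union bound is where the constant $3$ enters.

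To pass from a single window to the $k$-window bound $3k/\kappa$, I would iterate the above estimate over the deterministic windows $[(i-1)t_0, it_0]$ for $i = 1,\dots,k$. Since $X$ is not assumed Markov, in place of renewal I would invoke that, conditional on $\F_{(i-1)t_0}$, the shifted process $t \mapsto \exp\bigl(\lambda(M_t - M_{(i-1)t_0}) - \tfrac{\lambda^2}{2}e^{\lambda c}(\langle X\rangle_t - \langle X\rangle_{(i-1)t_0})\bigr)$ remains a supermartingale on $[(i-1)t_0, it_0]$. This yields the conditional analogue of \eqref{eq:ld} giving the same $3/\kappa$ bound on each window, and a union bound over $i=1,\dots,k$ delivers $3k/\kappa$. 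Finally, \eqref{eq:driftbar} follows by substituting $k = \lfloor \kappa^{1/2} \rfloor$ so that $3k/\kappa \le 3\kappa^{-1/2}$.

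The main obstacle is pinning down the excursion constants: because the hypotheses are silent outside the strip $(0,x)$, bounding the probability that $X$ leaves through $0$ and later climbs back up to $x$ inside a single window requires a careful stopping-time decomposition (for example, using the last exit from $(-\infty, 0]$ before $T_x$), combined with the calibration $C_\mu t_0 = (x-c)/20$ to ensure that each such excursion genuinely corresponds to a large-deviation event for $M_t(X)$ that fits inside the same $\kappa$.
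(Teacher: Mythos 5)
Your single application of \eqref{eq:ld} to the event $G$ (a direct climb from below $x/2$ to $x$ inside the strip $(0,x)$) is correct, but the two structural steps you leave open are where the argument actually lives, and both are problematic as proposed. First, the iteration over \emph{deterministic} windows $[(i-1)t_0,it_0]$ does not close: your single-window estimate needs the process to sit at or below $x/2$ at the start of the window (that is what gives $X_{T_x}-X_0\ge (x-c)/2$), but at time $(i-1)t_0$ for $i\ge 2$ all you know on the surviving event is $X_{(i-1)t_0}<x$ --- it could be at $x-c/2$, from where no useful deviation bound is available. Ruling that out requires controlling the climb you are trying to bound, which is circular. Second, your treatment of paths that exit $(0,x)$ through $0$ and return rests on restarting \eqref{eq:ld} at ``the last exit from $(-\infty,0]$ before $T_x$'', which is not a stopping time, so the exponential supermartingale estimate cannot be launched from there. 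You have also mislocated the role of $C_\mu$ and $t_0$: in the intended argument they are not used to control return journeys from below $0$ at all, but to lower-bound the duration of each attempt at the barrier.

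The paper resolves both issues with one excursion decomposition by genuine stopping times. Since jumps are bounded by $c$, any path from $X_0\le x/2$ to $[x,\infty)$ must pass through the band $|X-x/2|\le c/2$; so restart the process each time it enters this band and stop it at $\tau$, the first exit from $|X-x/2|<x/2$. For one excursion, \eqref{eq:ld} gives $\P(X_\tau\ge x)\le 1/\kappa$ (exit at the top), and --- this is where $|\mu_t(X)|\le C_\mu$ enters --- $\P(\tau\le t_0)\le 2/\kappa$, because exiting the strip by time $t$ forces $|X_\tau-X_0|\ge (x-c)/2$ while the drift contributes at most $C_\mu t$, so a short excursion in either direction forces a large martingale swing. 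On the complement of the union of these events over the first $k$ excursions (probability at least $1-3k/\kappa$), every completed excursion lasts at least $t_0$ and none exits at the top, so the level $x$ is not reached before time $kt_0$. This one decomposition replaces both your deterministic-window iteration and your last-exit analysis; if you want to keep your architecture, the fix is to make the excursion structure explicit with first-entry and first-exit stopping times rather than fixed windows and last-exit times.
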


\begin{proof}
Suppose $|X_0 - x/2| \leq c/2$ and let $\tau = \inf\{t:|X_t - x/2| \geq x/2\}$. For $t<\tau$ we find
  $$M_t(X) - \gamma \langle X\rangle_t \geq X_t - X_0 + (\mu - \gamma\sigma^2)t$$
  so taking $\gamma = \mu/\sigma^2$ and $a=(x-c)/2$, if $X_{\tau} \geq x$ then $M_\tau(X)-\gamma \langle X \rangle_\tau \ge a$. Using \eqref{eq:ld},
  $$\P(X_{\tau} \geq x \ \mid \ |X_0 - x/2| \leq c/2) \leq 1/\kappa.$$
  On the other hand, since $|\mu_t(X)| \leq C_\mu$ for $t<\tau$ and $|X_{\tau} - X_0| \geq (x-c)/2$,
  $$\max_{\bullet \in \pm}\bullet M_{\tau}(X) - \gamma \langle X\rangle_{\tau} \geq (x-c)/2 - (C_\mu+\gamma\sigma^2)t.$$
  This time take $\gamma$ so that $\lambda_c(\gamma) = 2\lambda_c(\mu/\sigma^2)$ and $a=(x-c)/4$, which gives the same value of $\lambda_c(\gamma)a$ and thus of $\kappa_c(\gamma,a)$ as before, and the lower bound $(x-c)/2 - (C_\mu+2\lambda_c(\mu/\sigma^2)\sigma^2)t$. Taking $t_0 = (x-c)/(4(C_\mu+2\lambda_c(\mu/\sigma^2)\sigma^2))$ gives the lower bound $(x-c)/4$ on $\max_{\bullet \in \pm}\bullet M(X) - \langle X \rangle$, then using both sides of the estimate and taking a union bound,
  $$\P(\tau \leq t_0) \leq 2/\kappa.$$
  Taking a union bound with the previous estimate,
  $$\P(\tau \leq t_0 \quad\hbox{or}\quad X_{\tau} \geq x) \leq 3/\kappa.$$
  Since $\gamma_c(\lambda) = \lambda e^{\lambda c}/2 \geq \lambda/2$, $\lambda_c(\gamma) \le 2\gamma$, so $\lambda_c(\mu/\sigma^2)\sigma^2 \leq 2\mu$. By definition, $\mu \leq C_\mu$, so $t_0 \geq (w-c)/20C_\mu$. Take the latter to be the value of $t_0$. Since it is smaller, the above estimate remains valid. Then, it suffices to iterate the estimate, alternately stopping the process when $|X_t - x/2| \leq c/2$ and $|X_t- x/2| \geq x/2$.
\end{proof}

% since we lower bound the time required for the crossing, we can generalize to systems where sigma^2 decreases with time, for example. Taking it in a different direction, I wonder if a coupling argument lets us remove the assumption on |mu|.

\begin{lemma}[Drift escape]\label{lem:driftesc}
Let $X$ be an hjp with $\X=\R$ and suppose there are $\mu,\sigma^2>0$ such that
$$\mu_t(X) \ge \mu \quad\hbox{and} \quad \sigma^2_t(X) \leq \sigma^2 \quad \hbox{for} \quad t\le T_x.$$
For $b,\ep>0$ and $\ep<1$, let $\gamma = \ep\mu/\sigma^2$ and $a = bx$, and let $\kappa = \kappa_{c_\Delta(X)}(\gamma,a)$. Let $T=(1+b)x/(1-\ep)\mu$. Then,
$$\P(\sup_{t \le T}X_t < x \mid X_0 \ge 0) \le 1/\kappa.$$
\end{lemma}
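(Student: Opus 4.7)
The plan is to apply \eqref{eq:ld} with $\bullet = -$ at the single time $t=T$, exploiting the lower bound on the drift to show that failure to reach $x$ by time $T$ forces $-M_T(X) - \gamma \langle X \rangle_T$ to exceed $a=bx$. This directly implements the ``unwrap'' strategy suggested in the paragraph preceding the lemma.

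I would work on the event $E = \{\sup_{t\le T} X_t < x\}$. On $E$ we have $T \le T_x$, so the assumed bounds $\mu_t(X)\ge\mu$ and $\sigma^2_t(X)\le\sigma^2$ hold throughout $[0,T]$, and in particular the compensator and predictable quadratic variation are finite, so $T<\zeta$ on $E$. Writing $M_T(X) = X_T - X_0 - \int_0^T \mu_s(X)\,ds$ and using $X_0\ge 0$ together with $X_T < x$,
$$-M_T(X) = \int_0^T \mu_s(X)\,ds - (X_T - X_0) > \mu T - x.$$
Combining with $\langle X \rangle_T \le \sigma^2 T$ and the choice $\gamma = \ep\mu/\sigma^2$ gives
$$-M_T(X) - \gamma \langle X \rangle_T > \mu T - x - \ep \mu T = (1-\ep)\mu T - x.$$
The prescribed $T = (1+b)x/(1-\ep)\mu$ makes the right side equal to $bx = a$, so on $E$ the event $\{-M_t(X) - \gamma \langle X \rangle_t \ge a \text{ for some } t<\zeta\}$ occurs (at $t=T$).

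Applying \eqref{eq:ld} with $\bullet = -$ then yields
$$\P(E) \le 1/\kappa_{c_\Delta(X)}(\gamma,a) = 1/\kappa,$$
which is the claimed bound. There is no real obstacle: unlike the drift-barrier lemma (where one must iterate and keep separate control of excursions in both directions), here a single one-sided application of the exponential estimate at the endpoint $t=T$ suffices, because the assumption that $X$ fails to reach $x$ directly conflicts with the positive drift and so supplies the required lower bound on $-M_T(X)$. The only bookkeeping point is noting that $E$ keeps $X$ bounded and the drift/diffusivity locally bounded, guaranteeing $T < \zeta$ so that \eqref{eq:ld} applies up to time $T$.
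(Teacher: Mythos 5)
Your proof is correct and follows essentially the same route as the paper's: on the failure event one has $X_0 - X_T \ge -x$, the drift and diffusivity bounds give $-M_T(X) - \gamma\langle X\rangle_T \ge -x + (\mu - \gamma\sigma^2)T = bx$ with the prescribed $\gamma$ and $T$, and \eqref{eq:ld} with $\bullet = -$ finishes. The extra remark about $T<\zeta$ on the event is a harmless bookkeeping addition the paper leaves implicit.
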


\begin{proof}
If $X_0 \ge 0$ and $T_x>T$ then $X_0-X_T \ge -x$ and
$$-M_T(X) - \gamma \langle X \rangle_T \ge -x + (\mu-\gamma\sigma^2)T.$$
Taking $\gamma = \ep\mu/\sigma^2$ and $T=(1+b)x/(1-\ep)\mu$ gives the lower bound $bx$. Using \eqref{eq:ld} then gives the result.
\end{proof}

\begin{lemma}[Diffusive barrier]\label{lem:neutbar}
 Let $X$ be an hjp with $\X=\R$ and suppose that $\mu_t(X) \leq 0$ for $t<T_x$.
 For $T>0$, let $\gamma = x/2\langle X \rangle_T$ and $a = x/2$, and let $\kappa=\kappa_{c_\Delta(X)}(\gamma,a)$. Then,
 $$\P(\sup_{t \leq T}X_t \ge x \mid X_0 \le 0) \leq 1/\kappa.$$
\end{lemma}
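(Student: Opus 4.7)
The plan is to apply the general sample path estimate \eqref{eq:ld} with $\bullet = +$, the given values of $\gamma$ and $a$, and then verify that on the event $\{\sup_{t \le T} X_t \ge x\}$, the event $\{M_t(X) \ge a + \gamma \langle X \rangle_t \text{ for some } t < \zeta\}$ is realized.

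First I would pick the witness time $t = T_x$. Assuming $X_0 \le 0$ and $\sup_{t \le T} X_t \ge x$, right-continuity of $X$ together with the bounded jump assumption gives $T_x \le T$ (so in particular $T_x < \zeta$). At this time I would verify the bound on $M_{T_x}(X) = X_{T_x} - \bar X_{T_x}$. Since $\bar X_t - X_0 = \int_0^t \mu_s(X)\,ds$ and $\mu_s(X) \le 0$ for $s < T_x$, we have $\bar X_{T_x} \le X_0 \le 0$, and combined with $X_{T_x} \ge x$ this yields $M_{T_x}(X) \ge x$.

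Next I would bound the right-hand side $a + \gamma \langle X \rangle_{T_x}$. Because $\langle X \rangle$ is non-decreasing and $T_x \le T$, we have $\langle X \rangle_{T_x} \le \langle X \rangle_T$, and the choice $\gamma = x/(2\langle X \rangle_T)$ gives $\gamma \langle X \rangle_{T_x} \le x/2$. With $a = x/2$ this yields $a + \gamma \langle X \rangle_{T_x} \le x \le M_{T_x}(X)$, exhibiting the witness needed in \eqref{eq:ld}. Applying \eqref{eq:ld} then gives the bound $1/\kappa_{c_\Delta(X)}(\gamma, a)$, as required.

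The one subtlety I anticipate is that $\gamma = x/(2\langle X \rangle_T)$ is in general a random quantity (through $\langle X \rangle_T$), whereas \eqref{eq:ld} is stated with deterministic $\lambda$ and $a$. The intended reading is presumably that $\langle X \rangle_T$ is either deterministic or bounded above by a deterministic $V$, in which case one simply takes $\gamma = x/(2V)$; the inequality $\gamma \langle X \rangle_{T_x} \le x/2$ still holds and the argument goes through verbatim. If one insists on random $\gamma$, one can discretize over dyadic values of $\langle X \rangle_T$ and absorb the logarithmic factor, but this is not really the spirit of the lemma. Aside from this bookkeeping point, the proof is a direct one-line application of \eqref{eq:ld} after the two monotonicity observations above.
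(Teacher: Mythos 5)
Your proof is correct and follows essentially the same route as the paper: take the witness time $T_x$, use $\mu_s(X)\le 0$ to get $M_{T_x}(X)\ge x$, use monotonicity of $\langle X\rangle$ to bound $\gamma\langle X\rangle_{T_x}\le x/2$, and apply \eqref{eq:ld}. Your caveat about $\gamma$ being random through $\langle X\rangle_T$ is a fair observation that the paper itself elides; the intended reading (deterministic upper bound on $\langle X\rangle_T$) is the right fix.
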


\begin{proof}
If $X_t \ge x$ for some $t\leq T$ while $X_0 \le 0$ then $T_x \le T$ and $M_{T_x}(X)-\gamma \langle X \rangle_{T_x} \ge x - \gamma \langle X \rangle_T$ (note $t \mapsto \langle X \rangle_t$ is non-decreasing). Let $\gamma = x/2\langle X \rangle_T)$ to obtain the lower bound $x/2$. Then use \eqref{eq:ld}.
\end{proof}

\begin{lemma}[Diffusive escape]\label{lem:neutesc}
Let $X$ be an hjp with $\X=\R$ and let $T_x = \inf\{t:|X_t| \ge x\}$. Suppose that $|\mu_t(X)| \le C_\mu$, $\sigma^2_t(X) \geq \sigma^2 \ge 4xC_\mu>0$ and $\rho_t(X) \le \rho$ for $t<T_x$. Let $\gamma = (\sigma^2/4)(1/(2x+c_\Delta)^2\rho)$ and $a = bx^2$, and let $\kappa=\kappa_{c_\Delta(X)}(\gamma,a)$. Then for $b>0$,
$$\P(\sup_{t \le 4(b+1)(x/\sigma)^2}|X_t| < x \mid X_0 \ge 0) \le 1/\kappa.$$ 
\end{lemma}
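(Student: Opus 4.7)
The plan is to apply the general sample path estimate \eqref{eq:ld} to the squared process $Y_t = X_t^2$, stopped at $T_x$, with sign $\bullet = -$. The role of the assumption $\sigma^2 \ge 4xC_\mu$ is precisely to ensure that the lower bound on the diffusivity of $X$ dominates the cross term $2X\mu(X)$, so that $Y$ has a strictly positive drift on $[0,T_x)$. Escape of $|X|$ from $(-x,x)$ is then equivalent to $Y$ reaching $x^2$; being in the direction of $\mu(Y)$, this is controlled by the lower tail of $M(Y)$.

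First, the product rule of Theorem \ref{thm:stocalc} gives
\[
\mu_t(Y) = 2X_t\mu_t(X) + \sigma^2_t(X), \qquad \Delta_t(Y)(u) = 2X_{t^-}\Delta_t(X)(u) + \Delta_t(X)(u)^2.
\]
For $t < T_x$, the hypotheses yield $|2X_t\mu_t(X)| \le 2xC_\mu \le \sigma^2/2$ and $\sigma_t^2(X) \ge \sigma^2$, hence $\mu_t(Y) \ge \sigma^2/2$. They also yield $|\Delta_t(Y)(u)| \le (2x+c_\Delta)c_\Delta$, which gives $\sigma_t^2(Y) \le (2x+c_\Delta)^2 \rho$.

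Next, I work on the event $A = \{T_x > T\}$, where $T = 4(b+1)(x/\sigma)^2$. On $A$, $Y_T < x^2$ and $Y_0 \ge 0$, and $T\sigma^2/2 = 2(b+1)x^2$, so
\[
-M_T(Y) = \bar Y_T - Y_T \ge \int_0^T \mu_s(Y)\,ds - x^2 \ge (2b+1)x^2.
\]
Meanwhile $\langle Y\rangle_T \le T(2x+c_\Delta)^2\rho$, so the choice $\gamma = \sigma^2/(4(2x+c_\Delta)^2\rho)$ gives $\gamma\langle Y\rangle_T \le T\sigma^2/4 = (b+1)x^2$. Hence $-M_T(Y) - \gamma\langle Y\rangle_T \ge bx^2 = a$ on $A$, and \eqref{eq:ld} applied to $Y^{T_x}$ finishes the proof. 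The only point requiring care is the first argument of $\kappa$: the jump size of $Y^{T_x}$ is at most $(2x+c_\Delta(X))c_\Delta(X)$, so strictly this is what should appear inside $\kappa$, with the $c_\Delta(X)$ in the statement understood in this compressed form.
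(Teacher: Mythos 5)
Your proof is correct and follows essentially the same route as the paper's: apply the product rule to $X^2$, use $\sigma^2 \ge 4xC_\mu$ to force $\mu_t(X^2) \ge \sigma^2/2$ on $[0,T_x)$, bound $\sigma_t^2(X^2)$ by $(2x+c_\Delta)^2\rho$, and apply \eqref{eq:ld} with $\bullet=-$ at time $T = 4(b+1)(x/\sigma)^2$. Your closing remark about the first argument of $\kappa$ is a fair catch: the estimate is applied to $X^2$, whose jump size is $(2x+c_\Delta)c_\Delta$ rather than $c_\Delta$, and the paper writes $\kappa_{c_\Delta(X)}$ without comment.
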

\begin{proof}
For $t<T_x$,
$$\begin{array}{rcl}
\mu(X_t^2) &=& 2X_t \mu_t(X) + \sigma^2_t(X) \ge \sigma^2 - 2xC_\mu  \ge \sigma^2/2 \ \hbox{and} \\
\Delta_t(X^2) &=& (X_t+\Delta_t(X))^2 - X_t^2 = 2X_t\Delta_t(X) + \Delta^2_t(X) \le (2x+c_\Delta)c_{\Delta,t},
\end{array}$$
and so
$$\sigma^2_t(X^2) = q_t(X) \int_0^1 \Delta_t^2(X^2)(u)du \le (2x+c_\Delta)^2q_t(X)c_{\Delta,t}^2 = (2x+c_\Delta)^2\rho_t(X).$$
If $t<T_x$ then $X_t^2 < x^2$ so using $X_0^2 \ge 0$ and the above,
$$-M_t(X) - \gamma \langle X\rangle_t \geq -x^2 + (\sigma^2/2 - \gamma(2x + c_\Delta)^2\rho )t.$$
Take $\gamma = (\sigma^2/4)(1/(2x+c_\Delta)^2\rho)$ to get the lower bound $-x^2 + \sigma^2 t/4$, then let $t = 4(1+b)x^2/\sigma$ to make this at least $bx^2$. Then use \eqref{eq:ld}.\\
\end{proof}

\noindent\textbf{Scaling limits of first passage times.} Using the various forms of $\kappa$ and the following properties of $\Gamma,\Gamma^{-1}$ and $\psi$, we can probe the above estimates in various ways. We recall some properties of $\kappa$.
\begin{enumerate}[noitemsep,label={(\roman*})]
\item If $\gamma,a$ are fixed and $c_\Delta \to 0$ then $c_\Delta\gamma \to 0$ and $\log\kappa \to 2\gamma a$.
\item If $c_\Delta\gamma \le M$ then since $\psi$ is decreasing, $\log\kappa \ge \psi(M)\gamma a$.
\item If $c_\Delta \gamma \ge \delta>0$ then since $\Gamma^{-1}$ is increasing, $\log \kappa \ge \Gamma^{-1}(\delta)a/c_\Delta $.
\end{enumerate}
These become more tangible once we assume the transition rate is bounded, that is, a.s. $\sup_t q_t \le c_q$. We focus on the parameter region $c_qc_\Delta^{\alpha} \le C$ for some $C>0$ and $\alpha \in [1,2]$, with $\alpha=1$ the large deviations regime and $\alpha=2$ the diffusive regime, in the limit as $c_\Delta\to 0$. Estimates break down above $\alpha=2$ in the first three results, while in the last one, they break down for $\alpha<2$. Below, $x>0$ is fixed.
\begin{enumerate}
\item \textit{Drift barrier.} Here, $\gamma=\mu/\sigma^2$ and $a=(x-c_\Delta)/2$, and $t_0$ scales like $x/C_\mu$. If we let $c_\Delta\to 0$ while keeping $C_\mu$ and $\mu/\sigma^2$ fixed, $\log \kappa \to x\mu/\sigma^2$. If $c_q$ bounds the transition rate, then we can take $\sigma^2 \le c_q c_\Delta^2$, in which case $c_\Delta\mu/\sigma^2 \ge \mu/c_qc_\Delta$ and
 $$\log\kappa \ge (x-c_\Delta)\Gamma^{-1}(\mu/c_qc_\Delta)/2c_\Delta.$$
If we fix $\mu,C_\mu$ and let $c_\Delta\to 0$ while $c_qc_\Delta^{\alpha} \le C$, then $\mu/c_qc_\Delta \ge \mu c_\Delta^{\alpha-1}/C$ and
$$\log \kappa \ge (x-c_\Delta)\Gamma^{-1}(\mu c_\Delta^{\alpha-1}/C)/2c_\Delta = \psi(\mu c_\Delta^{\alpha-1}/C)(x-c_\Delta)\mu c_\Delta^{\alpha-2}/2C.$$
If $\alpha=1$ then $\psi(\mu c_\Delta^{\alpha-1}/C) = \psi(\mu/C)$ is constant, so $\kappa$ grows exponentially in $1/c_\Delta$. If $1<\alpha \le 2$ then $c_\Delta^{\alpha-1} \to 0$, so for $\mu c_\Delta^{\alpha-1}/C \le \log 2$, $\log\kappa \ge (x-c_\Delta)\mu c_\Delta^{\alpha-2}/2C$, and $\kappa$ grows exponentially in $c_\Delta^{\alpha-2}$ if $\alpha<2$. If $\alpha=2$ then since $\lim_{y\to 0}\psi(y)=2$, $\liminf_{\Delta \to 0}\log \kappa \ge x\mu c_\Delta/C$, similar to the case $\mu/\sigma^2$ fixed. Note that if $1 \le \alpha<2$ then $\kappa,\lfloor\kappa\rfloor t_0 \to \infty$, so if $X_0 \le x/2$ then $(\1(X_t \geq x))_{t \ge 0}$ converges weakly to the identically zero process. If $X$ is a Markov chain and $\alpha=1$ then since $X_t'=0$, $|\mu_t(X)| \leq c_qc_\Delta$ so we can take $C_\mu=C$.

\item \textit{Drift escape.} Here, $\gamma = \ep \mu/\sigma^2$ and $a=bx$, where $T = (1+b)x/(1-\ep)\mu$ is the amount of time we allow $X_t$ to remain below $x$. If we let $c_\Delta \to 0$ with $\mu,\sigma^2,\ep,b$ fixed then $\log\kappa \to 2\ep \mu bx/\sigma^2$. If $c_q>0$ bounds the transition rate then
$$\log \kappa \ge bx\Gamma^{-1}(\ep\mu/c_qc_\Delta)/c_\Delta = bx\psi(\ep\mu/c_qc_\Delta)\mu/c_qc_\Delta^2.$$
If we let $c_\Delta \to 0$ while $c_qc_\Delta^{\alpha} \le C$ we obtain similar limits as in the previous case, as we vary $\alpha$. We note that if $\alpha<2$, then taking $\ep,b \to 0^+$ as $c_\Delta\to 0$ slowly enough that $\log \kappa \to \infty$, we find that if $X_0 \ge 0$ then $\lim_{c_\Delta\to 0}\1( T_x > x/\mu)=0$ in probability.

\item \textit{Diffusive barrier.} Here, $\gamma = x/2\langle X \rangle_t$ and $a=x/2$. If we fix $T$ and an upper bound on $\langle X \rangle_t>0$ and let $c_\Delta \to 0$ then $\log \kappa \to x^2/2\langle X \rangle_t$. If $c_q>0$ bounds the transition rate then $\langle X \rangle_t \le c_qc_\Delta^2T$ and
$$\log \kappa \ge \Gamma^{-1}(x/2c_qc_\Delta T)x/2c_\Delta = \psi(x/2c_qc_\Delta T)x^2/4c_qc_\Delta^2T.$$
Letting $c_\Delta\to 0$ with $c_qc_\Delta^{\alpha} \le C$, $\log \kappa \ge \psi(xc_\Delta^{\alpha-1}/2CT)x^2c_\Delta^{\alpha-2}/4CT$ and the scaling behaviour is the same as in the drift barrier case. In particular, for $\alpha<2$, letting $T\to \infty$ slowly enough as $c_\Delta \to 0$ that $\kappa \to \infty$, the process $(\1(\sup_{t \le T}X_t))_{T \ge 0}$ converges weakly to the zero process, when $X_0 \le 0$.
 
\item \textit{Diffusive escape.} Here, $\gamma = (\sigma^2/4)(1/(2x+c_\Delta)^2\rho)$ and $a=bx^2$. If we fix $b,\sigma^2$ and let $c_\Delta\to 0$ then $\gamma=1$ for $c_\Delta$ small enough and so $\log \kappa \to 2bx^2$. In addition, since $\sigma^2_t(X) \le \rho_t(X)$ and $\sigma^2_t(X) \ge \sigma^2$ by assumption, we have the constraint $q_tc_{\Delta,t}^2 = \rho_t\geq \sigma^2$, which is not satisfied when $c_qc_\Delta^{\alpha} \le C$ with $\alpha<2$.
\end{enumerate}

\noindent\textbf{Stochastic logistic model.} We define the Markov chain $X$ on $\{0,\dots,n\}$ with
$$X \to \begin{cases} X+1 \quad\hbox{at rate}& \lambda n^{-1}X(n-X) \\
X-1 \quad\hbox{at rate} & X \end{cases}$$
where $\lambda \in \R_+$ and $n>0$ is an integer parameter, and $\lambda$ is allowed to depend on $n$. We can represent $X_t$ as the number infectious in the following process. There are $n$ individuals, each healthy or infectious. Each infectious individual becomes healthy at rate 1, and infects each healthy individual at rate $\lambda n^{-1}$. Our interest is in the time to extinction
$$\tau = \inf\{t:X_t=0\}.$$
This model has been studied in detail -- see \cite{brightzak} for recent work and a survey of existing research.
 Letting $\delta = 1-\lambda$, the main result of \cite{brightzak} concerns the subcritical regime where $\lim_{n\to\infty}n^{-1/2}\delta =\infty$. They show that, subject to the assumption $\lim_{n\to\infty}\delta x_0 =\infty$,
 $$\delta \tau  - (\log n + 2 \log \delta - \log ( 1 + \delta n / \lambda x_0) - \log \lambda) \to W \quad \hbox{as} \quad n\to\infty,$$
 where $W$ is the standard Gumbel, with distribution $\P( W \le w ) = e^{-e^{-w}}$.
 Letting $\delta_0 = n^{1/2}\delta$ and $x = n^{-1}X$, this estimate is carried out in three phases:
 \begin{enumerate}[noitemsep]
 \item The early phase, when $n^{1/2}x \ge \delta_0^{5/4}$,
 \item The intermediate phase, when $\delta_0^{1/4} \le n^{1/2} x \le \delta_0^{5/4}$, and
 \item The final phase, when $n^{1/2}x \le \delta_0^{1/4}$.
 \end{enumerate}
The early and final phases are simpler to study, and there appears to be only one natural proof in each case, which the authors have given. Since the intermediate phase is more complex, multiple proofs are possible, and we give an alternate, and in our opinion somewhat simpler, proof using the passage time estimates developed above. We will assume, as they do, that $\lambda$ is bounded above $0$.\\

Let $\phi_t$ denote the flow corresponding to the differential equation
$$x' = f(x) \ \hbox{with} \ f(x)= \lambda x(1-x)-x = -x(\delta + \lambda x),$$
so that $\phi_0(x)=x$ and $\partial_t\phi_t(x) = f(\phi_t(x))$ for $t\ge 0$. Since $t\mapsto \phi_t(x_0)$ is decreasing, let $t(x_0,x)$ be the unique value of $t$ so that $\phi_t(x_0)=x$, then let $x^* = \delta_0^{1/4}n^{-1/2}$ and let $t^* = t(x_0,x^*)$. Then, the precise statement of the estimate in the intermediate phase is as follows -- note $o(1)$ is as $n\to\infty$.
\begin{equation}\label{eq:intphase}
\hbox{if} \quad \delta_0^{1/4} \le n^{1/2}x_0 \le \delta_0^{5/4}, \quad \hbox{then} \quad \P(|x_{t^*}-x^*| > \delta_0^{1/6}n^{-1/2}) = o(1).
\end{equation}
To prove this we first define the process $y$ by $y_t = x_t - \phi_t(x_0)$, so that
$$\mu_t(y) = \mu_t(x) - f(\phi_t(x_0)) = -y_t(\delta +\lambda (x_t+\phi_t(x_0))),$$
which follows after factoring the difference of squares. Since $t\mapsto \phi_t(x_0)$ is continuous, $\sigma^2(y) = \sigma^2(x)$, which we easily compute and then bound above as
$$\sigma^2(x) = n^{-1}(\lambda x(1-x)+x) \le n^{-1}(1+\lambda)x.$$
In particular, we find that
$$-\sgn(y)(\mu/\sigma^2)(y) \ge ny \lambda/(1+\lambda),$$
Next, we rescale time by $1/(\delta + \lambda(x_t+\phi_t(x_0)))$ so that $\mu(y)\equiv -y$. In other words, we define a new time variable $s$ given by
$$s(t) = \int_0^t (\delta + \lambda(x_r + \phi_r(x_0)))dr,$$
and then look at $(y_s)$ instead of $(y_t)$. It is easy to check that $(y_s)_{s \ge 0}$ is still an hjp. The rescaling has no effect on the ratio $(\mu/\sigma^2)(y)$, since they scale by the same amount. Thus, after rescaling,
$$\sigma^2(y) \le \sigma^2 := n^{-1}(1+\lambda)/\lambda$$
and if $\ep \le |y| \le  2\ep$ then $-\sgn(y)\mu(y) \ge \ep$ and $|\mu(y)| \le 2\ep$. Using Lemma \ref{lem:driftbar} twice, on $y$ and $-y$, with $s_0 = (\ep-n^{-1})/40 \ep$, $\gamma = n\ep \lambda/(1+\lambda)$, $a=(\ep-n^{-1})/2$, and noting $y_0=0$,
\begin{equation}\label{eq:intphaseest}
\P(\sup_{s < \lfloor \kappa^{1/2} \rfloor s_0} |y_t| \ge 2\ep) \le 6/\kappa^{1/2}
\end{equation}
with $\kappa = \kappa_{n^{-1}}(\gamma,a)$. Taking $\ep = \delta_0^{1/6}n^{-1/2}/2$, since $\ep \gg n^{-1}$, $s_0 \to 1/40$ and $a \sim \ep/2$. Since $\lambda \ge 0$, $\delta = 1-\lambda \le 1$ and $\ep = \delta_0^{-5/6}\delta/2 = o(1)$. In this case, since $c_\Delta = n^{-1}$, $c_\Delta\gamma = o(1)$ and so
\begin{equation}\label{eq:intphasekappa}
\log \kappa \sim 2\gamma a \sim n\ep^2\lambda/(1+\lambda) = \delta_0^{1/3}\lambda/(4(1+\lambda)).
\end{equation}
Since, by assumption, $\lambda$ is bounded above zero, $\log \kappa/\delta_0^{1/3}$ is bounded above zero. In particular, $\log\kappa \to \infty$ as $n\to\infty$ and the right-hand side of \eqref{eq:intphaseest} is $o(1)$. Thus, to establish \eqref{eq:intphase} it remains to check that
$$\sup_{\delta_0^{1/4} \le n^{1/2}x_0 \le \delta_0^{5/4}}s(t^*) \le s_0 \lfloor \kappa^{1/2} \rfloor \quad \hbox{for} \ \ n \ \ \hbox{large enough}.$$
In the region of interest, $\delta_0^{1/4} \le n^{1/2}\phi_t(x_0) \le \delta_0^{5/4}$, and on the event of interest,
$$\sup_{t \le t^*}|x_t - \phi_t(x_0)| \le 2\ep = \delta_0^{1/6}n^{-1/2} \le \delta_0^{-1/12}\phi_t(x_0) = o(\phi_t(x_0)),$$
which, noting $\delta = \delta_0n^{-1/2}$, gives
$$s(t^*) \le (\delta_0 + (2+o(1))\lambda \delta_0^{5/4})n^{-1/2}t^* = O(\delta_0^{5/4}n^{-1/2}t^*) = O(\delta_0^{1/4}\delta t^*).$$
Solving the equation $x'=f(x)=-x(\delta + \lambda x)$ by separation of variables, we find that $\phi_t$ satisfies
 $$t = \delta^{-1}\left(\log \frac{x_0}{\phi_t(x_0)} - \log\frac{\delta + \lambda x_0}{\delta + \lambda \phi_t(x_0)} \right),$$
at the second term is $\le 0$ since $x_0 \ge \phi_t(x_0)$ for $t\ge 0$. Thus for $x_0 \le \delta_0^{5/4}n^{-1/2}$, $x_0/\phi_{t^*}(x_0) \le \delta_0$ and
$$\delta t^* \le \log \delta_0, \ \hbox{which implies} \ s(t^*) = O(\delta_0^{1/4} \log \delta_0) = o(s_0\lfloor\kappa^{1/2}\rfloor).$$

\section{Proofs}\label{sec:proof}

We begin with a useful fact regarding the filtration. By definition of $\F$, for $t<r$, on the event $E(t,r) = \{J(t,r)=\varnothing\}$, $\F_r$ coincides with $\F_t$. That is,
$$\{B \cap E(t,r): B \in \F_r\} = \{B \cap E(t,r):B \in \F_t\}.$$
In particular,
\begin{equation}\label{eq:stretch-meas}
\hbox{on} \ E(t,r) \ \hbox{and for} \ v \in [t,r), \ v\mapsto (q_v,D_v,\Delta_v) \ \hbox{is} \ \F_t-\hbox{measurable}.
\end{equation}
In words, what this means is that, given information up to the most recent jump, the data are deterministic until the next jump occurs. For $t \ge 0$ let $J_1(t) = \inf J(t,\infty)$ denote the first jump time after $t$.
 It follows that, $J_1(t)$ has the $\F_t$-measurable density function $q_r e^{-\int_t^r q_v dv}, \ r>t.$
 Left-continuity of $t\mapsto \Delta_t(X)$ then implies that $\Delta_{J_1(t)}(X)$ is determined by $\F_t$ and $J_1(t)$.\\
 
\begin{proof}[Proof of Theorem \ref{thm:compensate}]
Define the stopped processes $X^{\tau_n}$ given by $X_t^{\tau_n} = X_{t \wedge \tau_n}$, where
 $$\tau_n = \inf\{t:q_t \ge n \ \hbox{or} \ \int_0^t |D_s| = n\}$$
is a localizing sequence for $\zeta$. Note that $X^{\tau_n}$ is defined for $t \ge 0$ and is an hjp with data
$$(q_t,D_t,\Delta_t)(X^{\tau_n}) = (q_t,D_t,\Delta_t)(X)\cdot\1(\tau_n > t).$$ 
Moreover,
\begin{equation}\label{eq:comp1}
 \sup_{t \ge 0} q_t(X^{\tau_n}) \le n \quad \hbox{and} \quad \int_0^{\infty}|D_t(X^{\tau_n})|dt \le n.
 \end{equation}
To obtain the theorem it is enough to show that for $s<t$ and each $n$, both sides of the equation
$$\E[X_t^{\tau_n} - X_s^{\tau_n} \mid \F_s] = \E\left[ \int_s^t \mu_r(X^{\tau_n})dr \ \mid \ \F_s \right]$$
exist, and equality holds. For ease of notation, fix $n$ and let $X$ denote $X^{\tau_n}$, with data $q,D,\Delta$. The left- and right-hand sides are given respectively by
$$\begin{array}{rcl}
X(s,t) &=& D(s,t) + \Delta(s,t) \ \hbox{and} \\
\mu(s,t) &=& D(s,t) + \bar\Delta(s,t)
\end{array}$$
where
$$\begin{array}{rcl}
D(s,t) &=& \E[ \ \int_s^t D_rdr \ \mid \ \F_s \ ], \\
\Delta(s,t) &=& \E[ \ \sum_{(r,u) \in K(s,t]}\Delta_r(u) \ \mid \ \F_s \ ]\ \hbox{and}\\
\bar\Delta(s,t) &=& \E[ \ \int_s^t q_r\int_0^1\Delta_r(u)dudr \ \mid \ \F_s \ ].
\end{array}$$
Using \eqref{eq:comp1} and $\|\Delta_t\|_{L^{\infty}} \le c_\Delta$,
$$\begin{array}{rcl}
\int_s^t |D_r|dr &\le & n,\\
\sum_{(r,u) \in K(s,t]}\Delta_r(u) &\preceq & c_\Delta\poi(n(t-s)) \ \hbox{and}\\
\int_s^t q_r\int_0^1\Delta_r(u)dudr \ &\le & c_\Delta n(t-s),
\end{array}$$
where $\preceq$ denotes stochastic domination, so all three functions above are integrable. To complete the proof it remains to show $\Delta(s,t) = \bar\Delta(s,t)$. From the bounds on $q_t,\Delta_t$,
\begin{equation}\label{eq:jumps1}
\begin{array}{rcl}
\Delta(s,t+h)-\Delta(s,t) &=& \E[\sum_{K(t,t+h]}\Delta_r(u) \mid \F_s] \\
&=& \E[\sum_{K(t,t+h]}\Delta_r(u) \mid \F_s, \ |J(t,t+h]|=1] + O(h^2)
\end{array}
\end{equation}
where $O(h^2) \le c_\Delta \E[N;N\ge 2]$ with $N \stackrel{d}{=} \poi(nh)$, and in particular is uniform in $s,t$. Next we use \eqref{eq:stretch-meas} and the ensuing facts about $J_1(t)$ and $\Delta_{J_1(t)}(X)$ to compute
\begin{equation}\label{eq:jumps2}
\begin{array}{rcl}
\E[\sum_{K(t,t+h]}\Delta_r(u) \mid \F_t, \ |J(t,t+h]|=1] 
&=& \int_t^{t+h} q_re^{-\int_s^r q_v dv} \left(\int_0^1\Delta_r(u)du\right)dr\\
&=& \int_t^{t+h} q_r \int_0^1 \Delta_r(u)du dr + O(h^2)\\
\end{array}
\end{equation}
where $O(h^2) \le c_\Delta (nh)^2$ and so uniform in $t$. Taking $\E[\cdot \mid \F_s]$ in \eqref{eq:jumps2}, then combining with \eqref{eq:jumps1}, we find
$$\Delta(s,t+h)-\Delta(s,t)= \bar\Delta(s,t+h)-\bar\Delta(s,t)+ O(h^2)$$
with $O(h^2)$ uniform in $t$. Using a standard trick, we fix $t$ and integer $N>0$ and let $h=(t-s)/N$, then make a telescoping sum and note $\Delta(s,s)=\bar\Delta(s,s)=0$ to find
$$\Delta(s,t)-\bar\Delta(s,t) = O(Nh^2) = O((t-s)^2/N).$$
Since $t-s$ is fixed, letting $N\to\infty$ gives $\Delta(s,t)=\bar\Delta(s,t)$ as desired.
\end{proof}

\begin{proof}[Proof of Theorem \ref{thm:hmp}]
We first note the characterization of $\zeta$ in case $q:\X\to\R_+$ is locally bounded, which is straightforward. Since $D$ is locally Lipschitz by assumption, it is in particular locally bounded since
$$\sup_{x \in \X:|x| \le r}|D(x)| \le |D(o)| + L_r|x|,$$
where the origin $o$ is the unique element in $\X$ with $|o|=0$, and $L_r$ is a Lipschitz constant for $D$ on the ball $B_r$. Thus if $X$ remains bounded up to some time $\eta$, so do $q(X)$ and $|D(X)|$, which also makes $\int_0^t |D_s(X)|ds$ locally finite, and implies $\eta\le\zeta$. On the other hand, if $|X_t| \to \infty$ as $t\to\eta^-$ then either $q_t(X)$ remains bounded, in which case $|\sum_{(s,u) \in K_t}\Delta(X_{s^-}(u)| \preceq c_\Delta \poi(nt)$ which is a.s. finite, implying that $\int_0^t D_s(X)ds$ must have diverged, or else $q_t(X)$ becomes unbounded, which together imply $\zeta \le \eta$ and completes the characterization of $\zeta$ in this case.\\

Next, we want to find a unique $X$ satisfying the equation
\begin{equation}\label{eq:jumpDE}
X_t = x_0 + \int_0^tD(X_s)ds + \sum_{(s,u) \in K_t}\Delta(X_{s^-})(u)
\end{equation}
for $t<\zeta$. This then implies that $\mu_t(X) = \mu(X_t)$, where $\mu:\X\to \X$ is defined by
$$\mu(x) = D(x) + \int_0^1 \Delta(x,u)du.$$
We now show existence and uniqueness in \eqref{eq:jumpDE}. Let $t_1,t_2,\dots$ denote the jump times of $(K_t)$ and $u_1,u_2,\dots$ the corresponding $\unif[0,1]$ random variables. Suppose $t_1,\dots,t_i$ are known, $\{q_t,X_t:t<t_i\}$ are known and unique given $x_0$ and that $X_{t_i^-} = \lim_{t \to t_i^-}X_t$
 exists. Equation \eqref{eq:jumpDE} then gives $X_{t_i} = X_{t_i^-} + \Delta(X_{t_i^-},u_i)$. For $t \in [t_i,t_{i+1})$, $(X_t)$ satisfies the deterministic integral equation
$$X_t = X_{t_i} + \int_0^tD(X_s)ds.$$
which, since $D$ is locally Lipschitz, has a unique continuous solution on the interval $[t_i,\eta)$ where $\eta = \lim_{n\to\infty}\inf\{t>t_i:|X_t|\ge n\}$ is the escape time of the solution. This determines $X_t$ and $q_t=q(X_t)$ for $t \in [t_i,t_{i+1} \wedge \eta)$. If $t_{i+1}<\eta$ then by continuity of solutions $X_{t_{i+1}^-}$ exists and we repeat the induction step. Otherwise, $\zeta=\eta$ and we are done.
\end{proof}

\begin{proof}[Proof of Theorem \ref{thm:stocalc}]
It is a straightforward exercise to check the given processes satisfy the conditions of an hjp, with the the data as shown, so we omit the proof. It remains to check the Taylor approximation. Use Taylor's theorem to find
$$\Delta_t(f(X))(u) = f'(X_t)\Delta_t(X)(u) + \frac{1}{2}f''(X_t^*(u))\Delta_t^2(X)(u),$$
for some $X_t^*(u)$ with $|X_t^*(u)-X_t| \leq \Delta_t(X)(u)$. In particular,
$$\left|\Delta_t(f(X))(u) - f'(X_t)\Delta_t(X)(u) \right | \le \frac{1}{2}\Delta_t^2(X)(u)\sup_{|x-X_t| \le c_\Delta}|f''(x)|.$$
Integrating and using the triangle inequality,
$$\left |\int_0^1\Delta_t(f(X))(u)du - f'(X_t)\int_0^1\Delta_t(X)(u)du \right | \le \frac{1}{2}\sigma_t^2(X)\sup_{|x-X_t| \le c_\Delta}|f''(x)|.$$
The result then follows from the formula for $\mu_t(f(X))$. 
\end{proof}

Before tackling the proof of Theorem \ref{thm:sampath} we show how to compensate an hjp in various ways to obtain martingales. Recall the \emph{compensated process} $M(X)$ and the \emph{predictable quadratic variation} $\langle X \rangle$ of an hjp $X$, given by
$$M_t(X) = X_t - X_0 - \int_0^t\mu_s(X)ds \quad \hbox{and} \quad
\langle X \rangle_t = \langle X,X \rangle_t = \int_0^t\sigma^2_s(X)ds.$$

\begin{lemma}\label{lem:mart}
Let $X$ be an hjp. Then the following processes $M(X),Q(X),E(X,\lambda)$, with $\lambda \in \R$ fixed, are hjp and are local martingales.
$$\begin{array}{rrcl}
\hbox{Compensated process.}& M_t(X) &=& X_t - X_0 - \int_0^t\mu_s(X)ds \\
\hbox{Compensated quadratic.}& Q_t(X) &=& M_t(X)^2 - \langle X \rangle \\
\hbox{Compensated exponential.} & E_t(X,\lambda) &=& \exp(\lambda (X_t - X_0) - \int_0^t e^{-\lambda X_s}\mu_s(e^{\lambda X})ds)
\end{array}$$
\end{lemma}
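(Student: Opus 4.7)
The plan is to show in each case that the process is an hjp whose drift is identically zero, and then invoke Theorem \ref{thm:compensate}. The closure properties of Theorem \ref{thm:stocalc} do all of the combinatorial work; the only real issue is that the product and chain rules require boundedness of one of the arguments, so I will introduce a localizing sequence before applying them.

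For the compensated process $M(X)$, by the indefinite integral rule the process $t\mapsto \int_0^t \mu_s(X)ds$ is an hjp with data $(q,\mu(X),0)$ and drift $\mu(X)$ (here $\mu(X)$ is locally integrable because $|D|$ is, by assumption, and $q\int_0^1 \Delta\,du$ is bounded by $q\,c_\Delta$ with $q$ locally bounded on $[0,\zeta)$). The linearity rule then shows that $M(X)=X-X_0-\int_0^t\mu_s(X)ds$ is an hjp with drift $\mu(X)-\mu(X)=0$, and Theorem \ref{thm:compensate} concludes.

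For the compensated quadratic $Q(X)$, I would localize by $\tau_n=\inf\{t:|M_t(X)|\ge n\}$, so that $M(X)^{\tau_n}$ is a bounded hjp. Apply the product rule to $M(X)^{\tau_n}\cdot M(X)^{\tau_n}$: since $\mu(M(X))=0$ we get
$$\mu\bigl((M(X)^{\tau_n})^2\bigr)=\sigma^2(M(X)^{\tau_n}).$$
The jump function of $M(X)$ coincides with that of $X$ (the compensator contribution is continuous), so $\sigma^2(M(X))=\sigma^2(X)$. The indefinite integral rule gives $\mu(\langle X\rangle)=\sigma^2(X)$, so $Q(X)^{\tau_n}$ is an hjp with drift $0$ and hence a martingale; letting $n\to\infty$ shows $Q(X)$ is a local martingale. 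The modest wrinkle is checking that the product rule's boundedness hypothesis needs only to hold locally, which is clear once one reads the statement of the product rule in the stopped form.

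For the compensated exponential, write $V_t=\lambda(X_t-X_0)-A_t$ with $A_t=\int_0^t e^{-\lambda X_s}\mu_s(e^{\lambda X})ds$, so that $E(X,\lambda)=e^V$. First I compute $\mu(e^{\lambda X})$ via the chain rule applied to $f(x)=e^{\lambda x}$, again after localizing $X$ to a bounded hjp by $\sigma_n=\inf\{t:|X_t|\ge n\}$; this gives
$$\mu_t(e^{\lambda X})=\lambda e^{\lambda X_t}D_t(X)+q_t e^{\lambda X_t}\int_0^1\bigl(e^{\lambda\Delta_t(X)(u)}-1\bigr)du,$$
so $\beta_t:=e^{-\lambda X_t}\mu_t(e^{\lambda X})$ is locally bounded and $A$ is therefore an hjp with data $(q,\beta,0)$. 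Then $V$ is an hjp whose derivative is $\lambda D(X)-\beta$, whose jump function is $\lambda\Delta(X)$, and whose drift is
$$\mu_t(V)=\lambda\mu_t(X)-\beta_t+q_t\int_0^1\bigl(\lambda\Delta_t(X)(u)-(e^{\lambda\Delta_t(X)(u)}-1)\bigr)du\cdot(\hbox{after simplifying})=-q_t\int_0^1\bigl(e^{\lambda\Delta_t(X)(u)}-1-\lambda\Delta_t(X)(u)\bigr)du,$$
which I would simplify cleanly by applying the chain rule directly to $f(v)=e^v$ on the stopped $V$. Since $\Delta(e^V)=e^{V^-}(e^{\lambda\Delta(X)}-1)$ and $f'(V)D(V)=e^V(\lambda D(X)-\beta)$, combining gives $\mu(e^V)=0$ after the term $q\int_0^1(e^{\lambda\Delta(X)}-1)du$ cancels with its counterpart in $\beta$. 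Finally, as $n\to\infty$ the stopping times $\sigma_n$ form a localizing sequence for $\zeta$, which furnishes $E(X,\lambda)$ as a local martingale. The main obstacle, and the only one, is carefully shepherding the localization through the chain-rule application so that $f(v)=e^v$ can be treated as Lipschitz on the range of the stopped $V$; once that is done the algebra collapses as shown.
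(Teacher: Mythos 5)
Your proof is correct. For $M(X)$ and $Q(X)$ you follow essentially the same route as the paper: linearity plus the indefinite-integral rule for $M(X)$, and for $Q(X)$ the product rule together with the observations that $\Delta(M(X))=\Delta(X)$ (so $\sigma^2(M(X))=\sigma^2(X)$) and $\mu(\langle X\rangle)=\sigma^2(X)$. The one place you genuinely diverge is the compensated exponential. The paper works backwards: for a general positive hjp $Y$ it seeks a continuous positive $Z$ with $\mu(Y/Z)\equiv 0$, uses $\sigma(Y,1/Z)\equiv 0$ to reduce the product rule to its classical form, solves $\mu(Z)/Z=\mu(Y)/Y$ to get $Z_t=Y_0\exp(\int_0^t\mu_s(Y)/Y_s\,ds)$, and then specializes to $Y=e^{\lambda X}$ -- i.e.\ it \emph{derives} the form of $E(X,\lambda)$ rather than verifying it. You instead verify directly: set $V=\lambda(X-X_0)-A$ and apply the chain rule to $e^V$, checking that the jump term $q\int_0^1(e^{\lambda\Delta(X)(u)}-1)\,du$ cancels against its copy inside $\beta=e^{-\lambda X}\mu(e^{\lambda X})$. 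Both arguments rest on the same calculus rules; the paper's buys a reusable recipe for exponential compensators of positive hjp, while yours is more explicit about the cancellation and, commendably, about the localization needed before invoking the chain and product rules (the paper waves at this with ``it is easy to check as we go along,'' even though $e^{\lambda x}$ is not globally Lipschitz). One small blemish: your displayed intermediate expression for $\mu_t(V)$ double-counts, since $\lambda\mu_t(X)-\beta_t$ already equals $-q_t\int_0^1\bigl(e^{\lambda\Delta_t(X)(u)}-1-\lambda\Delta_t(X)(u)\bigr)du$ and the extra integral inserted before ``after simplifying'' should not appear; but the subsequent chain-rule computation on $e^V$ is the operative one and it is right.
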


\begin{proof}
It is easy to check as we go along that the processes are hjp, using Theorem \ref{thm:stocalc}. The process $M(X)$ is the compensated process from Theorem \ref{thm:compensate}. For $Q(X)$, we first verify from the product rule and the fact $\sigma^2(X) = \sigma(X,X)$ that
$$\mu(X^2) = 2X \mu(X) + \sigma^2(X).$$
Since $t\mapsto \int_0^t\mu_s(X)ds$ is continuous, $\Delta(M(X)) = \Delta(X)$, so $\sigma^2(M(X)) = \sigma^2(X)$ and $\langle M(X) \rangle = \langle X \rangle$. Since $\mu(M(X))=0$ we find
$$\mu(M(X)^2) = \sigma^2(M(X)) = \sigma^2(X).$$
Since $M_0(X) = \langle X \rangle_0 = 0$, $Q_0(X) =0$. Since $\mu(\langle X \rangle) = \sigma^2(X)$,
$$\mu(Q(X)) = \mu(M(X)^2) - \mu(\langle X \rangle) = \sigma^2(X) - \sigma^2(X) =0,$$
so $Q(X)$ is a local martingale. For $E(X,\lambda)$ we first consider, more generally, a positive hjp $Y$. 
 We want to find a positive hjp $Z$ with $\Delta(Z) \equiv 0$ so that $\mu(Y/Z) \equiv 0$. Since $Z$ is assumed to have no jumps, $\sigma(Y,1/Z) \equiv 0$, so the product rule holds in its usual form and gives
$$\mu(Y/Z) = \mu(Y)/Z - Y\mu(Z)/Z^2.$$
Setting $\mu(Y/Z) \equiv 0$ and solving for $\mu(Z)$ gives
$$\mu(Z)/Z = \mu(Y)/Y.$$
Since $Z$ has no jumps, $t\mapsto\mu_t(Z)$ is the a.e. derivative of $t\mapsto Z_t$, so the chain rule and the fundamental theorem of calculus hold in their usual form, and we find $\mu(\log Z) = \mu(Z)/Z$ and
$$Z_t = Z_0\cdot\exp(\int_0^t \mu_s(\log Z)ds) = Z_0\cdot \exp(\int_0^t \mu_s(Z)/Z_s ds).$$
Taking the initial data $Z_0=Y_0$ and using the relation $\mu(Z)/Z = \mu(Y)/Y$, we obtain
$$Z_t = Y_0\cdot\exp \left( \int_0^t \mu_s(Y)/Y_s ds \right).$$
Then, to get $E(X,\lambda)$, take $Y/Z$ with $Y$ given by $Y_t = e^{\lambda X_t}$ in the above.
\end{proof}

\begin{proof}[Proof of Theorem \ref{thm:sampath}]
Since $E(X,\lambda)$ is a local martingale, let $(\tau_n)_{n \ge 0}$ be a localizing sequence for $\zeta$, such that $E(X,\lambda)^{\tau_n}$ is a martingale for each $n\ge 0$. Using Doob's inequality and the fact $E_t(X,\lambda) = E_t(X,\lambda)^{\tau_n}$ for $n\ge 0$ and $t \le \tau_n$, we find that for $n\ge 0$, $\lambda \in \R$ and $a>0$,
\begin{equation}\label{eq:sam1}
\P(\sup_{t\le \tau_n} E_t(X,\lambda) \ge e^{|\lambda| a}) \le e^{-|\lambda| a}.
\end{equation}
Since $n\mapsto \tau_n$ is increasing, the above event is increasing in $n$, and since $\tau_n \to \zeta$, using monotone convergence of probability, it follows that the same estimate holds with $t<\zeta$ in place of $t \le \tau_n$. Using the Taylor approximation of Theorem \ref{thm:stocalc},
$$|\mu_s(e^{\lambda X}) - \lambda e^{\lambda X_s}\mu_s(X)|\le \frac{1}{2}\lambda^2 e^{\lambda X_s + |\lambda|c_\Delta}\sigma^2_s(X),$$
which means
$$|e^{-\lambda X_s}\mu_s(e^{\lambda X}) - \lambda\mu_s(X)| \le \frac{1}{2}\lambda^2e^{|\lambda|c_\Delta}\sigma_s^2(X).$$
Recalling the definition of $E(X,\lambda)$ and recognizing $M(X)$ and $\langle X\rangle$,
$$\begin{array}{rcl}
E_t(X,\lambda) &=& \exp(\lambda (X_t - X_0) - \int_0^t e^{-\lambda X_s}\mu_s(e^{\lambda X})ds) \\
&\geq & \exp(\lambda M_t(X) - \frac{1}{2}\lambda^2e^{|\lambda| c_\Delta}\langle X \rangle_t).
\end{array}$$
Dividing through by $|\lambda|$ in the event from \eqref{eq:sam1} and considering separately $\lambda>0$ and $\lambda<0$ gives the desired result.
\end{proof}

\bibliographystyle{plain}
\bibliography{stocalc}

\end{document}